\newtheorem{theorem}{Theorem}[section]
\newtheorem{corollary}[theorem]{Corollary}
\newtheorem{lemma}[theorem]{Lemma}
\newtheorem{remark}[theorem]{Remark}
\newtheorem{proposition}[theorem]{Proposition}
\theoremstyle{definition}
\newtheorem{definition}[theorem]{Definition}
\newcommand{\CC}{\mathbb{C}}
\newcommand{\RR}{\mathbb{R}}
\newcommand{\ZZ}{\mathbb{Z}}
\newcommand{\Sharp}{\nolinebreak\hspace{-.05em}\raisebox{.6ex}{\scriptsize\bf \#}}
\begin{document}

\begin{center}
\end{center}

\title[Cayley graphs of reflection groups]
{Hamiltonian cycles in Cayley graphs of \\
imprimitive complex reflection groups}
\author{Cathy Kriloff and Terry Lay}
\address{Department of Mathematics \\
Idaho State University \\
Pocatello, ID 83209-8085}
\email{krilcath@isu.edu}
%\author{Terry Lay}
%\address{Department of Mathematics \\
%Idaho State University \\
%Pocatello, ID 83209-8085 (Retired)}
\email{layterr@gmail.com}
%\subjclass[2010]{Primary 05C45. Secondary 05C25.}
%\keywords{Hamiltonian cycle, Cayley graph, reflection group}

\begin{abstract}
Generalizing a result of Conway, Sloane, and Wilkes for real reflection groups, we show the Cayley graph of an imprimitive complex reflection group with respect to standard generating reflections has a Hamiltonian cycle.
This is consistent with the long-standing conjecture that for every finite group, $G$, and every set of generators, $S$, of $G$ the undirected Cayley graph of $G$ with respect to $S$ has a Hamiltonian cycle.
\end{abstract}
\maketitle

%\begin{keyword}
%  Hamiltonian cycle \sep Cayley graph \sep reflection group
%  
%  MSC: 05C45 \sep 05C25
%\end{keyword}

%\end{frontmatter}

\section{Introduction}
\label{sec:intro}

For a finite group $G$ and a subset $S$ of $G\setminus\{1\}$, the (right, undirected) \textit{Cayley graph} of $G$ with respect to $S$, $\Gamma(G,S)$, has vertices corresponding to the elements $g \in G$ and edges $(g,gs)$ and $(g,gs^{-1})$ for each $g \in G$ and $s \in S$.
The Cayley graph is vertex-transitive, regular, and connected when $S$ generates $G$, which we assume throughout.  
Label the edge from $g$ to $gs$ by $s$ and that from $gs$ to $g$ by $s^{-1}$ (note that edge labels for $\Gamma(G,S)$ are drawn from $S\cup S^{-1}$).
It is common to consider both together as a single undirected edge with $s$ and $s^{-1}$ indicating travel along the edge in the appropriate direction. 

A \textit{path} in $\Gamma$ is an ordered sequence of adjacent vertices in $\Gamma$ and a path is \textit{self-avoiding} if no vertex appears more than once.
A \textit{Hamiltonian path} is a self-avoiding path containing every vertex of $\Gamma$.
When the initial and final vertex of a Hamiltonian path are adjacent it determines a \textit{Hamiltonian cycle} and a graph containing a Hamiltonian cycle is called \textit{Hamiltonian}.

The question, dating back to 1969 in a monograph by Lov\'{a}sz, of whether every connected vertex-transitive graph has a Hamiltonian path, remains unresolved.
The stronger claim, that every connected vertex-transitive graph has a Hamiltonian cycle, is known to be false and it has been observed that the four known counterexamples are not Cayley graphs.
The resulting conjecture that for every finite group $G$ and any generating set $S$ the Cayley graph $\Gamma(G,S)$ has a Hamiltonian cycle also remains unresolved and finding such a cycle is an NP-complete problem in general.
See~\cite{Gallian-Witte84,Curran-Gallian96,Kutnar-Marusic09,Pak-Radoicic09} for surveys of the status and history of the problem and references, including those supplying counter-conjectures.
When $S$ is not closed under inversion, it is possible for the directed graph with vertices the elements of $G$ and only edges $(g,gs)$ for $g \in G$ and $s \in S$ to have no Hamiltonian cycle.
For instance, the directed circulant graph on $\ZZ_{12}$ with generators $3, 4$ (and $6$) is not Hamiltonian (see~\cite{Gallian-Witte84} and~\cite{Locke-Witte99}). 

The conjecture that every (undirected) Cayley graph is Hamiltonian is easy to prove for abelian groups and known to be true for several specific types of groups that are nearly abelian, with either specific or arbitrary generating sets.  
For instance the conjecture has been shown true when
\begin{itemize}
\item $G$ is a $p$-group~\cite{Witte86}, 
\item the commutator subgroup $G'$ is a cyclic $p$-group~\cite{Marusic83, Durnberger83, Durnberger85, Keating-Witte85, DGMW98}, 
\item the order of $G$ has few prime factors~\cite{KMMMS12}, 
\item the order of $G$ is odd and $G'$ has order $pq$ or is cyclic of order $p^a q^b$ for $a,b\geq 0$~\cite{Witte-Morris12},
\item $G$ is nilpotent and $G'$ is cyclic~\cite{Ghaderpour-Morris14}.
\end{itemize}
It is also known that the Cayley graph of the semidirect product of two cyclic groups with respect to a specific generating set is Hamiltonian~\cite{Alspach89}.
We prove as our main result (Theorem~\ref{th:cximprimHam}) that the conjecture is true for the highly non-abelian infinite family of complex reflection groups, $G=G(de,e,n)\cong \mathbf{\mu}^n \rtimes S_n$
with respect to commonly used generating sets of reflections.  Here $\mathbf{\mu}$ is the cyclic group of $de$-th roots of unity.

\textbf{Main Result.}
\textit{If $G$ is an irreducible imprimitive complex reflection group and $S$ is a standard generating set for $G$, then the (undirected right) Cayley graph $\Gamma(G,S)$ has a Hamiltonian cycle.}

Our result generalizes that in~\cite{CSW89}, which provides an algorithm to generate a Hamiltonian cycle in each $\Gamma(G,S)$ where $G$ is a finite real reflection group and $S$ is the standard set of generating simple reflections.
That paper utilized the Coxeter presentation of the groups to give an inductive proof of the existence, and hence recursive construction, of a Hamiltonian cycle.
It also explicitly treats the small number of base cases.

Although there is no such uniformly well-behaved presentation or set of generators for complex reflection groups (see~\cite{Bremke-Malle98, Shi02}), we use those that go back to~\cite{Coxeter67, Cohen76} and are given in the standard references~\cite{BMR98, Lehrer-Taylor09}.  
While these presentations, generating sets, and resulting Cayley graphs do not tend to satisfy the usual conditions for the existence of Hamiltonian cycles currently given in the literature (see further discussion at the end of Section~\ref{sec:background}), they do allow for an inductive approach similar to that in~\cite{CSW89}.
In order to exploit that approach we must treat six infinite families of groups as base cases.
For three of these families we explicitly write down Hamiltonian cycles and in the remaining three cases our proofs provide a method for doing so.

In two cases we utilize a process we call flipping which is sometimes referred to in the literature as a P\'{o}sa exchange and is 
similar to a process utilized in a probabilistic algorithm to find Hamiltonian cycles in general graphs (see~\cite{Angluin-Valiant79}).
It would be interesting to further explore the application of the flipping process and related algorithms to Cayley graphs of complex reflection groups and in particular, to determine whether there exist obstructions to their success in these graphs and if so, under what conditions.  
Another interesting direction would be to investigate how our result might be of use in group coding (see~\cite{KNS14}).

The paper is organized as follows.
In Section~\ref{sec:background}, following~\cite{Geck-Malle06}, we review necessary facts about real reflection groups and summarize the classification of complex reflection groups due to Shephard and Todd~\cite{Shephard-Todd54}.  
The classification consists of a three-parameter infinite family, $G(de,e,n)$, along with 34 exceptional groups.
This paper treats only the $G(de,e,n)$, though we have conducted some initial investigations for the exceptional complex reflection groups.
Generating sets $S$ for the $G(de,e,n)$ are also given in Section~\ref{sec:background}.
Our formulations of the commonly used Factor Group Lemma (see~\cite{KMMMS12}), the flipping process, and the method of lifting cycles from quotient graphs are described in Section~\ref{sec:lift}.
The main result %that all $\Gamma(G(de,e,n),S)$ are Hamiltonian, 
and its proof, including all base case lemmas, appear in Section~\ref{sec:cxgroupgraphs}.

\section{Background on reflection groups}
\label{sec:background}

A \textit{Coxeter system} $(G,S)$ is a group $G$ with set of generators $S=\{s_1,\dots,s_n\}$ that has a presentation of the form 
\[G=\langle s_1,\dots,s_n \mid (s_i s_j)^{m_{ij}} \rangle,\]
where $m_{ii}=1$ and $m_{ij}=m_{ji}\geq 2$ for $1\leq i\neq j \leq n$.
Such a group is called a \textit{Coxeter group} and is more commonly denoted $W$ due to the connection with Weyl groups.
The presentations of Coxeter groups are classified using diagrams that graphically encode the $s_i$ and the $m_{ij}$. 
The classification of finite irreducible Coxeter groups consists of four infinite families and six exceptional groups.
Finite Coxeter groups have a geometric incarnation as they are exactly the finite groups generated by orthogonal reflections of a real vector space (see~\cite{Bourbaki02,Humphreys90}).
It can also be shown that there is a natural set of generating reflections up to conjugacy, so such a choice is fixed and these are termed \textit{simple reflections}.
Every Cayley graph $\Gamma(G,S)$ with $G$ a finite irreducible Coxeter group and $S$ a set of simple reflections is shown in~\cite{CSW89} to have a Hamiltonian cycle.

The notions of reflection and classification of finite groups generated by reflections extend to the setting of an $n$-dimensional complex vector space $V$ (for a brief survey, see~\cite{Geck-Malle06}).
A linear transformation $r:V \to V$ is a \textit{reflection} if it is of finite order and has a $+1$-eigenspace of dimension $n-1$.
In the remaining complex dimension the reflection acts by a root of unity and hence may have order greater than two.
A finite subgroup, $G$, of $\mathrm{GL}(V)$ generated by reflections is called a \textit{reflection group on $V$}.
Since $G$ is finite, the standard averaging technique makes it possible to fix a non-degenerate $G$-invariant hermitian form on $V$ and consider $G$ as a subgroup of the unitary group on $V$.
Finiteness of $G$ also guarantees the representation on $V$ is completely reducible, which means it suffices to consider reflection groups and spaces on which they act irreducibly.
More precisely, $G$ is said to act irreducibly in dimension $k$ if its fixed point space is of dimension $n-k$ and it acts irreducibly when restricted to the complement of that fixed point space. 

We describe an infinite family of complex reflection groups.
Let $d,e,n \geq 1$, let $\mathbf{\mu}$ denote the cyclic group of $de$-th roots of unity, and let $\zeta$ generate $\mathbf{\mu}$.
Under the \textit{standard monomial representation}, $G(de,e,n)$ consists of 
\begin{align*}
 &\hbox{monomial matrices with nonzero entries $\zeta^{a_1},\dots, \zeta^{a_n}$}, \\
 &\hbox{such that $(\zeta^{a_1}\cdots \zeta^{a_n})^d=1$, or equivalently $a_1+\cdots +a_n \equiv 0 \bmod{e}$.}
\end{align*}
Each such monomial matrix may be written as a product of a diagonal matrix with entries $\zeta^{a_1},\dots,\zeta^{a_n}$ and a permutation matrix (obtained by permuting columns of the identity matrix).
This provides an alternative description of $G(de,e,n)$ as an index $e$ subgroup of $\mathbf{\mu}\wr S_n=\mathbf{\mu}^n \rtimes S_n$, and makes it clear that $|G(de,e,n)|=d^n e^{n-1} n!$.
In this perspective, $G(de,e,n)$ consists of all
\[(a_1,\dots,a_n \mid \sigma) \hbox{ such that } a_i \in \ZZ_{de},\, a_1+\cdots +a_n \equiv 0 \bmod{e}, \hbox{ and } \sigma \in S_n,\]
and the action of $S_n$ on $\mathbf{\mu}^n$ providing the semidirect product structure on $G(de,e,n)$ is
\[\sigma.(a_1,\dots,a_n)=(a_{\sigma(1)},\dots,a_{\sigma(n)}),\]
so that if $\sigma, \tau \in S_n$,
\[(a_1,\dots,a_n \mid \sigma)(b_1,\dots,b_n \mid \tau)=(a_1+b_{\sigma(1)},\dots,a_n+b_{\sigma(n)} \mid \sigma\tau),\]
where $\sigma\tau$ is computed by applying $\sigma$ first, $\tau$ second.
This is consistent with the use of the right Cayley graph and with matrix multiplication where $(a_1, a_2, \dots , a_n \mid \sigma)$ represents the $n \times n$ matrix whose only nonzero entries are the $\zeta^{a_i}$ in position $i,\sigma(i)$ for $1\leq i\leq n$. 

A reflection group $G$ on $V$ is \textit{imprimitive} if there is a decomposition $V=V_1\oplus V_2 \oplus \cdots \oplus V_k$ into proper nonzero subspaces such that $G$ permutes the subspaces.
The groups $G(de,e,n)$ with $de,n\geq 2$ are imprimitive in their action on a system of lines orthogonal to the reflecting hyperplanes and the exceptional groups are primitive.
The symmetric groups $G(1,1,n)$ do not act irreducibly in the standard monomial representation but do act irreducibly on the complement of the span of the sum of all the basis vectors and are primitive on this $(n-1)$-dimensional subspace. 

The irreducible finite reflection groups were classified in~\cite{Shephard-Todd54} (see also~\cite{Lehrer-Taylor09}) and consist of the
\begin{itemize}
\item groups $G(de,e,n)$, with $de\geq 2$, $n\geq 1$, and $(de,e,n)\neq (2,2,2)$, which are imprimitive and irreducible in dimension $n$, 
\item symmetric groups $G(1,1,n)$, which are primitive and irreducible in dimension $n-1$, and
\item $34$ primitive exceptional groups, numbered $G_4,\dots,G_{37}$, irreducible in dimensions $2$ through $8$.
\end{itemize}

The reason for the numbering is that the original classification listed $G(d,1,1)$, $G(1,1,n)$ and $G(de,e,n)$ with $de,n \geq 2$ separately.
If there is a $G$-invariant real subspace, $V_0$, of $V$ so that the canonical map $\CC \otimes_{\RR} V_0 \to V$ is a bijection, then $G$ is a \textit{real reflection group}.
The finite real reflection groups occur in the classification from~\cite{Shephard-Todd54} as:
\begin{align*}
G(1,1,n) &\hbox{ of type } A_{n-1}, \hbox{ the symmetric group } S_n, \\
G(2,1,n) &\hbox{ of type } B_n, \hbox{ the binary octahedral group } \{\pm 1\}^n \rtimes S_n, \\
G(2,2,n) &\hbox{ of type } D_n, \hbox{ an index two subgroup of } \{\pm 1\}^n \rtimes S_n, \\
G(m,m,2) &\hbox{ of type } I_2(m), \hbox{ the dihedral group of order } 2n, \hbox{ and} \\
G_{23}, G_{28}, G_{30}&, G_{35}, G_{36}, \hbox{ and } G_{37} \hbox{ of types } H_3, F_4, H_4, E_6, E_7, \hbox{ and } E_8 \hbox{ respectively}.
\end{align*}

The following explicit choices of representations of generators for the groups are provided in~\cite{Lehrer-Taylor09} and are consistent with the presentations provided by the diagrams in~\cite{BMR98}. 
For $1\leq i <n$, let 
\[r_i=(0,\dots,0 \mid (i\,\,i+1))=\hbox{the identity matrix with columns $i$ and $i+1$ interchanged,}\]
\begin{align*}
%r_i&=(0,\dots,0 \mid (i\,\,i+1))
%=\hbox{the identity matrix with columns $i$ and $i+1$ interchanged,} \\
s&=(-1,1,0,\dots,0 \mid (1\,\,2))
=\begin{bmatrix}
0 & \zeta^{-1} & 0 & \dots & 0 \\
\zeta & 0 & 0 & \dots & 0 \\
0 & 0 & 1 & \dots & 0 \\
0 & 0 & \dots & \ddots & 0 \\
0 & \dots & \dots & 0 & 1
\end{bmatrix}\!, \hbox{ and } \\
t&=(e,0,\dots,0 \mid 1)
=\begin{bmatrix}
\zeta^e & 0 & \dots & 0 \\
0 & 1 & \dots  & 0 \\
0 & 0 & \ddots & 0 \\
0 & \dots & 0 & 1
\end{bmatrix}.
\end{align*}
Then, using $e=1$ in $t$ for $G(d,1,n)$,
\begin{align*}
G(d,1,n)&=\langle t,r_1,r_2,\dots, r_{n-1}\rangle \hbox{ for } d\geq 2,n\geq 1, \\
G(e,e,n)&=\langle s,r_1,r_2,\dots, r_{n-1}\rangle,  \hbox{ for } e\geq 2, n\geq 1, \hbox{ and } \\
G(de,e,n)&=\langle s,t,r_1,r_2,\dots, r_{n-1}\rangle, \hbox{ for } d,e,n \geq 2.
\end{align*}
In particular, $G(d,1,n)$ and $G(e,e,n)$ are generated by $n$ reflections (are \textit{well-generated}) while the groups $G(de,e,n)$ with $d,e,n\geq 2$ require $n+1$ generators.

We note that the groups $G(de,e,n)$ do not tend to satisfy the sufficient conditions for the existence of Hamiltonian cycles currently known in the literature.
For example, Theorem~1.2 of~\cite{KMMMS12} and related references guarantee the existence of a Hamiltonian cycle in any Cayley graph of a finite group whose order has a ``small'' prime factorization of certain forms, and those of~\cite{Witte86, Witte-Morris12} apply to groups of prime power and odd order respectively.  
But such results will not apply in general to the $G(de,e,n)$. 
Since $G(de,e,n)$ is a semidirect product involving $S_n$, rather than cyclic groups of prime order and abelian groups, it is not of the form addressed in~\cite{Durnberger83} or~\cite{Alspach89}, and it is also not nilpotent for $n\geq 3$, so the results of~\cite{Ghaderpour-Morris14} on nilpotent groups do not generally apply.  
The results of~\cite{DGMW98, Keating-Witte85} apply to groups with commutator subgroups that are cyclic of prime power order.  
Using the presentations and the number of one-dimensional characters of $S_n$ and $\mu$, it is easy to count one-dimensional characters of $G=G(de,e,n)$ and the abelian group $G/G'$ and thus to compute $|G'|$.
This yields $|G'|=d^{n-1}$ when $e=n=1$, $|G'|=(de)^{n-1}n!/4$ when $n=2$ and $e$ is even, and $|G'|=(de)^{n-1}n!/2$ in the remaining cases.
This makes it clear that the results of~\cite{DGMW98, Keating-Witte85} do not generally apply.

Some Hamiltonicity results address groups with simple types of generators and relations.
For instance,~\cite{Glover-Marusic07} addresses groups generated by an involution and an element of order at least three whose product is of order three, while the three lemmas in Section 2 of~\cite{Pak-Radoicic09} involve generators that are involutions and/or satisfy very simple relations.
The generating sets we use typically do not meet these hypotheses on the number of, orders of, or relations between the generators. 
The main result in~\cite{Pak-Radoicic09} does provide, for an arbitrary group, the existence of a Hamiltonian cycle in its Cayley graph with respect to a relatively small generating set.  
While striking, this result serves a different purpose than we address here.
Lastly, several results in the broader graph theory literature on existence of Hamiltonian cycles treat graphs with sufficiently large degree and/or sufficiently small connectivity (see e.g.,~\cite{Dirac52, Jackson80}), which do not apply to the families of Cayley graphs we consider.

\section{Lifting cycles from quotient graphs}
\label{sec:lift}

Rather than specify a sequence of vertices, an equivalent way to describe a Hamiltonian cycle is to specify an initial vertex $v_1$ and a sequence of edges.
In the case of a Cayley graph it suffices to use $v_1=1$ and to list the sequence of edges as a sequence of generators, which we denote below with square brackets to connote an ordered list.
If $P=[s_1,s_2,\dots,s_{k-1},s_k]$ denotes the sequence of edges for a path, then $P\Sharp$ will denote $[s_1,s_2,\dots,s_{k-1}]$.

Our treatment of the base cases for the inductive proof of the main result for $G(de,e,n)$ involves the construction of explicit Hamiltonian cycles. 
We make use of the following techniques. 

\begin{lemma} 
\label{lem:l2.6}
Let $G$ be a group with generating set $S$ and let $\Gamma = \Gamma(G, S)$ denote its Cayley graph. 
  Let $B = [s_1, s_2, \dots, s_k]$, where each $s_i \in S$. Let  $v = s_1s_2 \cdots s_k$. Let $w_0 = 1$ and 
  for $0 < i < k$, let $w_i = s_1s_2 \cdots s_i$.  If $n > 0$,
  the path $P$ defined by $B^n \Sharp$ is
  self-avoiding if and only if whenever $0 \leq m < n$ and $v^m w_i = w_j$, it follows that  $m = 0$ and $i = j$.
\end{lemma}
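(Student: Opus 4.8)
The plan is to identify explicitly the sequence of vertices visited by the path $P = B^n\Sharp$ and then rewrite the self-avoiding condition as a statement about coincidences among those vertices.

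First I would fix notation: write $B^n\Sharp = [t_1,\dots,t_{nk-1}]$, so that $P$ begins at the vertex $1$ and its $(\ell+1)$-st vertex is $p_\ell = t_1t_2\cdots t_\ell$ for $0 \le \ell \le nk-1$, with $p_0 = 1$. Each index $\ell$ in this range has a unique expression $\ell = mk + i$ with $0 \le m \le n-1$ and $0 \le i \le k-1$, and the assignment $\ell \mapsto (m,i)$ is a bijection onto all such pairs. A short induction on $\ell$ (or simply grouping the first $mk$ factors into $m$ copies of the word $B$ and the remaining $i$ factors into the prefix $s_1\cdots s_i = w_i$) then gives
\[
p_{mk+i} = v^m w_i .
\]
This is also where passing from $B^n$ to $B^n\Sharp$ matters: the edge that is dropped would send $v^m w_{k-1}$ to $v^{m+1} = v^{m+1}w_0$, so the vertices actually visited by $P$ are exactly the elements $v^m w_i$ with $0 \le m < n$ and $0 \le i < k$, each occurring once.

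Next, $P$ is self-avoiding if and only if the elements $p_\ell$ are pairwise distinct. Given indices $\ell = mk+i$ and $\ell' = m'k+i'$, I would assume without loss of generality $m \ge m'$; then $\ell \ne \ell'$ is equivalent to ``$m > m'$, or else $m = m'$ and $i \ne i'$'', while $p_\ell = p_{\ell'}$ becomes, after multiplying by $v^{-m'}$ on the left, $v^{m-m'} w_i = w_{i'}$ with $0 \le m-m' \le n-1$. Hence $P$ fails to be self-avoiding precisely when there exist $M \in \{0,\dots,n-1\}$ and $i,j \in \{0,\dots,k-1\}$ with $v^M w_i = w_j$ yet not both $M = 0$ and $i = j$. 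The contrapositive of this statement is exactly the criterion in the lemma, and the reverse implication is trivial (if $M=0$ and $i=j$ then $v^M w_i = w_j$ automatically), so phrasing the condition as an implication loses nothing.

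I do not anticipate a genuine obstacle; the only point needing care is the index bookkeeping --- the uniqueness and range of the decomposition $\ell = mk+i$, the symmetry reduction to $m \ge m'$, and checking that the two sub-cases ``$M>0$'' and ``$M=0$ with $i\ne j$'' together correspond to the single negated clause ``$m=0$ and $i=j$'' appearing in the statement.
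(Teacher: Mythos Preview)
Your proposal is correct and follows essentially the same approach as the paper: identify the vertices of $P$ as the elements $v^m w_i$ with $0\le m<n$, $0\le i<k$, and then translate ``self-avoiding'' into the absence of a nontrivial coincidence $v^{m-m'}w_i=w_j$ after cancelling the smaller power of $v$. The paper's version is terser (it declares the forward direction immediate and handles the converse in two lines), but the substance is identical.
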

\begin{proof}
 The forward direction is immediate. To establish the converse, observe that vertices 
 along the path $P$ are given by expressions of the form $v^m w_i$, where $0 \leq m < n$ and $0 \leq i < k$. Suppose that for 
 two such vertices we have 
 $v^{m_1} w_i = v^{m_2} w_j$. We can assume that $m_1 \leq m_2$. Cancellation yields 
 $v^m w_i = w_j$, where $m = m_2 - m_1 < n$. It follows that $m_1 = m_2$ and $i = j$, establishing that $P$ is self-avoiding. 
\end{proof}
 
\begin{remark}
\label{rem:factorgroup}
  Note that, in the context of Lemma~\ref{lem:l2.6},  
  $B^n \Sharp$ is a Hamiltonian path if it is self-avoiding and the size of the group $G$ is $nk$. The path $B^n$ is closed if $v$ is of order $n$. 
  When $B^n$ is a Hamiltonian cycle, the condition 
  \[v^m w_i = w_j \hbox{ implies } m = 0 \text{ and } i = j\] 
  is equivalent to 
  \[w_0, w_1, \cdots , w_{k-1} \text{ visits each left coset of } H = \langle v \rangle \text{ exactly once. }\] 
  See Lemma 2.6 in~\cite{KMMMS12}.  
\end{remark}
  
If a Hamiltonian path is known it may be possible to alter the path into a cycle with a process we call {\it flipping}.
See Figure~\ref{fig:flip1}.

\begin{center}
\begin{figure} [h!]
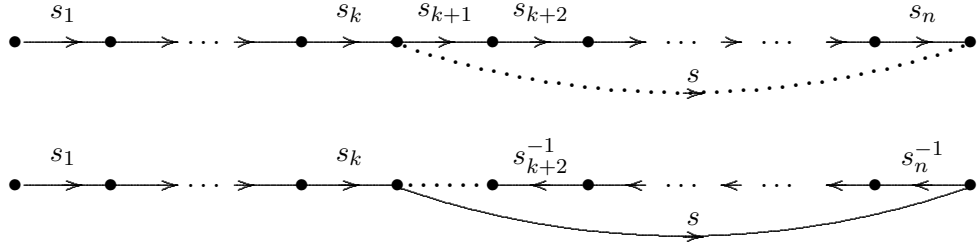

\begin{center}
\mbox
{
{
\beginpicture
     \setcoordinatesystem units <0.5in,0.5in>
      
     \setlinear
     \put {$\bullet$} at -7 1.5
     \put {$\bullet$} at -6 1.5
     \plot -6.9 1.5 -5.5 1.5 / 
     \plot -4.5 1.5 -.5 1.5 / 

     \plot 1.5 1.5 3 1.5 /

     \put {$\dots$} at -5 1.5

     \put {$\dots$} at 1 1.5
     \put {$\dots$} at 0 1.5
   
     \put {$\bullet$} at -4 1.5
     \put {$\bullet$} at -3 1.5
     \put {$\bullet$} at -2 1.5
     \put {$\bullet$} at -1 1.5
     \put {$\bullet$} at 2 1.5
     \put {$\bullet$} at 3 1.5
     
     \put{$s_1$} at -6.5 1.8
     \put{$s_k$} at -3.5 1.8
     \put{$s_{k+1}$} at -2.5 1.8
     \put{$s_{k+2}$} at -1.5 1.8
     \put{$s_n$} at 2.5 1.8
     
     \put{$s$} at .1 1.15
          
     \put {$\bullet$} at -7 0
     \put {$\bullet$} at -6 0
     \plot -6.9 0 -5.5 0 / 
     \plot -4.5 0 -3 0 / 
     \plot -2 0 -.5 0 / 

     \plot 1.5 0 3 0 /

     \put {$\dots$} at -5 0

     \put {$\dots$} at 1 0
     \put {$\dots$} at 0 0

     \put {$\bullet$} at -4 0
     \put {$\bullet$} at -3 0
     \put {$\bullet$} at -2 0
     \put {$\bullet$} at -1 0
     \put {$\bullet$} at 2 0
     \put {$\bullet$} at 3 0
 
     \arrow <6pt> [.2,.67] from -2.6 1.5 to -2.4 1.5 
     \arrow <6pt> [.2,.67] from -1.6 1.5 to -1.4 1.5 
     \arrow <6pt> [.2,.67] from -3.6 1.5 to -3.4 1.5 
     \arrow <6pt> [.2,.67] from -5.5 1.5 to -5.3 1.5 
     \arrow <6pt> [.2,.67] from -6.5 1.5 to -6.3 1.5 
     \arrow <6pt> [.2,.67] from 2.4 1.5 to 2.6 1.5 
     \arrow <6pt> [.2,.67] from .4 1.5 to .6 1.5 
     
     \arrow <6pt> [.2,.67] from -.6 1.5 to -.4 1.5 
     \arrow <6pt> [.2,.67] from -.4 0 to -.6 0 

     \arrow <6pt> [.2,.67] from 1.5 1.5 to 1.7 1.5 
     \arrow <6pt> [.2,.67] from 1.7 0 to 1.5 0 

     \arrow <6pt> [.2,.67] from -4.7 1.5 to -4.5 1.5 
     \arrow <6pt> [.2,.67] from -4.7 0 to -4.5 0

     \arrow <6pt> [.2,.67] from 0 .96 to .2 .96 

     \arrow <6pt> [.2,.67] from -1.4 0 to -1.6 0 
     \arrow <6pt> [.2,.67] from -3.6 0 to -3.4 0 
     \arrow <6pt> [.2,.67] from -5.5 0 to -5.3 0 
     \arrow <6pt> [.2,.67] from -6.5 0 to -6.3 0 
     \arrow <6pt> [.2,.67] from 2.6 0 to 2.4 0 
     \arrow <6pt> [.2,.67] from .6 0 to .4 0 
    
     \arrow <6pt> [.2,.67] from 0 -.54 to .2 -.54 
     
     \put{$s_1$} at -6.5 .3
     \put{$s_k$} at -3.5 .3
     \put{$s_{k+2}^{-1}$} at -1.5 .3
     \put{$s_n^{-1}$} at 2.5 .3
     
     \put{$s$} at .1 -.35
     
     \setsolid
     \circulararc 39 degrees from -3.05 0 center at 0 8.5

     \setdots
     \setplotsymbol({\Large .})
	 \plot -3 0 -2 0 /     
     
     \circulararc 39 degrees from -3.05 1.5 center at 0 10
     \setplotsymbol({\tiny -})
     \setsolid   
 
     \put{$\vspace{.5in}$} at -6 -.5
     
     \endpicture
}
}
\caption {\small Flipping a path}
 \label{fig:flip1}
\end{center}
\end{figure} 
\end{center}

\begin{definition}
If $P = [s_1,s_2, \dots , s_n]$ is a path and for some $k < n$ the vertices $s_1s_2\cdots s_k$ and $s_1s_2\cdots s_n$ are adjacent via an edge labeled $s$,
 then the {\it flip} of $P$ with respect to $s$, $F(P,s)$,  is the path $[s_1,s_2,\dots , s_k, s, s_n^{-1}, s_{n-1}^{-1}, \dots, s_{k+2}^{-1}]$. 
\end{definition}

The walks $P$ and $F(P,s)$ visit exactly the same vertices. 
In particular, if $P$ is self-avoiding, then $F(P,s)$ is self-avoiding.

Several of the proofs presented below make use of a lifting technique similar to that presented in the proof of Theorem~1.1 in~\cite{CSW89}. 
It relies on a method for combining disjoint cycles in the Cayley graph. 

\begin{definition}
\label{def:cjp}
If $C_1$ and $C_2$ are disjoint cycles in $\Gamma(G,S )$ and $r \in S$, we say that $C_1$ and $C_2$ satisfy the {\it commutative joining property} with respect to $r$ if there exist edges $(g_i,g_i s)$ in the $C_i$ and such that $g_2 = g_1 r$ and $r$ commutes with $s$.
\end{definition}

\begin{figure} [h!]
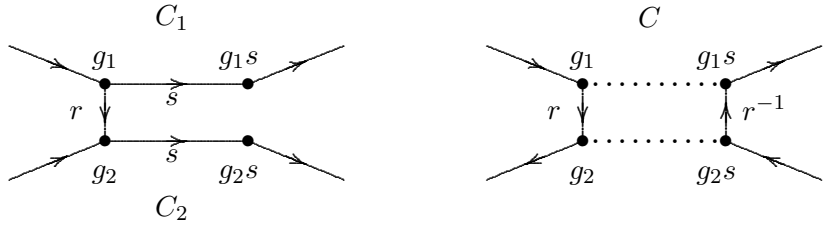

\begin{center}
\mbox
{
{
\beginpicture
     \setcoordinatesystem units <0.5in,0.5in>
     
     \setlinear
     
     \plot 0 1.4 1 1 /
     \plot 1 1 1 .4 /
     \plot 0 0 1 .4 /

     \arrow <6pt> [.2,.67] from 1.65 1 to 1.85 1     
     \arrow <6pt> [.2,.67] from 1.65 .4 to 1.85 .4     

     \arrow <6pt> [.2,.67] from .4 .16 to .6 .24      
     \arrow <6pt> [.2,.67] from 2.9 1.16 to 3.1 1.24     
     \arrow <6pt> [.2,.67] from 2.9 .24 to 3.1 .16     
     \arrow <6pt> [.2,.67] from .4 1.24 to .6 1.16
     \arrow <6pt> [.2,.67] from 1 .8 to 1 .6
     
     \plot 1 1 2.5 1 /
     
     \plot 1 .4 2.5 .4 /
     
     \plot 2.5 1 3.5 1.4 /
     
     \plot 2.5 .4 3.5 0 /
     
     \put {$s$} at 1.7 .85
     \put {$r$} at .7 .7
     \put {$s$} at 1.7 .25
 
     \put {$C_1$} at 1.7 1.7
     
     \put {$g_1$} at 1 1.25
     \put {$g_1s$} at 2.4 1.25

     \put {$g_2$} at 1 .05
     \put {$g_2s$} at 2.4 .05
     
     \put {$\bullet$} at 1 .4
     \put {$\bullet$} at 2.5 .4
 
     \put {$\bullet$} at 1 1
     \put {$\bullet$} at 2.5 1
     
     \put {$C_2$} at 1.7 -.3  
     
     \plot 5 1.4 6 1 /
     \plot 6 1 6 .4 /
     \plot 5 0 6 .4 /    
     
     \plot 7.5 1 8.5 1.4 /
     \plot 7.5 1 7.5 .4 /
     \plot 7.5 .4 8.5 0 /
     
     \put {$r$} at 5.7 .7
     \put {$r^{-1}$} at 7.9 .75
     
     \put {$\bullet$} at 6 .4
     \put {$\bullet$} at  7.5 .4
     \put {$\bullet$} at 6 1
     \put {$\bullet$} at  7.5 1
     
     \arrow <6pt> [.2,.67] from 5.6 .24 to 5.4 .16     
     \arrow <6pt> [.2,.67] from 7.9 1.16 to 8.1 1.24     
     \arrow <6pt> [.2,.67] from 8.1 .16 to 7.9 .24    
     \arrow <6pt> [.2,.67] from 5.4 1.24 to 5.6 1.16
     \arrow <6pt> [.2,.67] from 6 .8 to 6 .6
     \arrow <6pt> [.2,.67] from  7.5 .6 to 7.5 .8
     
     \setdots
     \setplotsymbol({\Large .})

     \plot 6 .4 7.5 .4 /
     \plot 6 1 7.5 1 /
     
     \setplotsymbol({\tiny -})
     \setsolid
     
     \put {$C$} at 6.7 1.7

     \put {$g_1$} at 6 1.25
     \put {$g_1s$} at 7.4 1.25

     \put {$g_2$} at 6 .05
     \put {$g_2s$} at 7.4 .05
     
\endpicture
}
}
\caption {\small Combining cycles with the commutative joining property}
\label{fig:commjoin}
\end{center}
\end{figure} 

When two cycles satisfy the commutative joining property they can be combined into a new cycle $C$ which spans the union of the vertex sets of $C_1$ and $C_2$. 
This is accomplished by first taking the edge $(g_1, g_1 r)$ from $C_1$ to $C_2$, traversing $C_2$ in reverse until arriving at $g_2 s$, then taking the edge $(g_2 s, g_2 s r^{-1})$. 
Because $r$ and $s$ commute, $g_2 s r^{-1} = g_1 r s r^{-1} = g_1 s$ and the cycle is completed by continuing on around $C_1$ to finish back at $g_1$.  
See Figure~\ref{fig:commjoin}.

The following propositions extend this joining strategy to sets of disjoint cycles.
\begin{proposition}
\label{prop:multi-join}
Let $G$ be a finite group with generating set $S$. Let $\{C,C_1,\dots,C_n\}$ be a collection of disjoint cycles in $\Gamma(G,S)$ and $r \in S$.
If $C$ and each $C_l$ have the commutative joining property with respect to $r$ then all of the cycles can be combined to form
a single cycle spanning the union of the various vertex sets.
 
\end{proposition}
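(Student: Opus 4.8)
The plan is to induct on $n$, using the two-cycle commutative joining construction described just before Figure~\ref{fig:commjoin} both as the base case ($n=1$) and as the engine of the inductive step. For the step, assume the result for $n-1$ and start from $\{C,C_1,\dots,C_n\}$ as in the hypothesis. First apply the two-cycle construction to $C$ and $C_n$ to obtain a single cycle $C'$ with $V(C')=V(C)\cup V(C_n)$. Disjointness of the new collection $\{C',C_1,\dots,C_{n-1}\}$ is immediate: $V(C')$ is the disjoint union of $V(C)$ and $V(C_n)$, each of which is disjoint from every $V(C_l)$ with $l<n$, and the $C_l$ were already pairwise disjoint. So the only thing to check is that $C'$ still has the commutative joining property with each $C_l$, $l<n$, with respect to $r$; then the inductive hypothesis applied to $\{C',C_1,\dots,C_{n-1}\}$ yields a single cycle on $V(C')\cup V(C_1)\cup\dots\cup V(C_{n-1})=V(C)\cup V(C_1)\cup\dots\cup V(C_n)$, as desired.

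To verify persistence of the joining property, fix witnesses: for each $l\le n$ choose an edge $(h_l,h_ls_l)$ of $C$ and an edge $(g_l,g_ls_l)$ of $C_l$ with $g_l=h_lr$ and $rs_l=s_lr$. Forming $C'$ from $C$ and $C_n$ deletes from $C$ exactly one edge, namely the witness $(h_n,h_ns_n)$, and leaves every other edge of $C$ intact in $C'$; it also deletes the edge $(g_n,g_ns_n)$ of $C_n$ and inserts two rungs joining $C$ to $C_n$, but none of this touches any $C_l$ with $l<n$. Hence it suffices to show that $(h_l,h_ls_l)\ne(h_n,h_ns_n)$ as undirected edges for every $l<n$: then $(h_l,h_ls_l)$ survives in $C'$ and, paired with the untouched edge $(g_l,g_ls_l)$ of $C_l$, witnesses the commutative joining property for $C'$ and $C_l$.

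The edge-distinctness claim is where the disjointness hypothesis does the work. Suppose $\{h_l,h_ls_l\}=\{h_m,h_ms_m\}$ for some $l\ne m$ (we will apply this with $m=n$, but it holds in general). If $h_l=h_m$, then $g_l=h_lr=h_mr=g_m$ would lie in $V(C_l)\cap V(C_m)$, a contradiction. Otherwise $h_l=h_ms_m$ and $h_m=h_ls_l$, so, using $rs_l=s_lr$, we get $g_m=h_mr=h_ls_lr=h_lrs_l=g_ls_l$; but $(g_l,g_ls_l)$ is an edge of $C_l$, so $g_m=g_ls_l\in V(C_l)$, again contradicting $V(C_l)\cap V(C_m)=\emptyset$. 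Thus the witness edges $(h_l,h_ls_l)$ are pairwise distinct on $C$, which is exactly what the inductive step requires.

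I do not expect any serious obstacle; the only point needing care is the bookkeeping that joining $C$ to $C_n$ disturbs none of the edges of $C$ (or of the other $C_l$) that serve as joining witnesses for the remaining cycles, and that is precisely the distinctness statement above. One could equally avoid the induction and argue directly: since the edges $(h_l,h_ls_l)$ are pairwise distinct, delete all of them from $C$ together with the edges $(g_l,g_ls_l)$ from the $C_l$, insert all $2n$ rungs, and check that the resulting graph is $2$-regular and connected, hence a single cycle spanning $\bigcup_l V(C_l)\cup V(C)$; the inductive formulation is simply a cleaner way to package this verification.
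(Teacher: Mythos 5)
Your proof is correct and takes essentially the same iterative-joining approach as the paper, which also fixes witness edges for each $C_l$ on $C$ and observes that joining one cycle leaves the remaining witnesses intact. The only difference is that you supply a justification (via disjointness of the $C_l$) for why the witness edges on $C$ are automatically distinct, a point the paper asserts without proof.
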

\begin{proof}

For each cycle $C_l$, find distinct edge pairs $(g_l, g_ls_l)$ in $C$ and $(g_lr,g_ls_lr)$ in $C_l$ where $r$ and $s_l$ commute. 
When $C$ is combined with one of the other cycles as described above, the resulting cycle contains each of the edges $(g_l,g_ls_l)$ corresponding to the remaining $C_l$. 
Therefore, the process can be iterated until the cycle contains all of the $C_l$.
\end{proof}

\begin{proposition}
\label{prop:lifting}
Let $G$ be a finite group with generating set $S=\{r_1,\dots,r_n\}$, where $r_n$ is of order two. Let $H=\langle r_1,\dots,r_{n-1}\rangle$.  
If $\Gamma(H,S \setminus \{r_n\})$ has a Hamiltonian cycle $C_H$ with the property
\begin{equation*}
 \text{whenever $(g,gs)$, $(gs, gst)$ are consecutive edges in $C_H$, either $s$ or $t$ must commute with $r_n$},
 \end{equation*}
then $\Gamma(G,S)$ has a Hamiltonian cycle.
\end{proposition}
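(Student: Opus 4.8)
The plan is to split $G$ into left cosets of $H$, lift $C_H$ to a Hamiltonian cycle inside each coset, and then splice these cycles together along $r_n$-edges using the commutative joining construction of Figure~\ref{fig:commjoin} (the engine behind Proposition~\ref{prop:multi-join}). We may assume $r_n\notin H$, since otherwise $H=G$ and $C_H$ is already a Hamiltonian cycle of $\Gamma(G,S)$, and we may assume $|H|\geq 2$ (the case $|H|=1$ being degenerate). Put $T=S\setminus\{r_n\}$ and $k=|H|$. For each left coset $xH$, the map $h\mapsto xh$ is an isomorphism from $\Gamma(H,T)$ onto the subgraph of $\Gamma(G,S)$ induced on $xH$ by the $T$-labeled edges, so carrying $C_H$ across produces a Hamiltonian cycle of that subgraph; any cycle obtained this way, a \emph{translate} of $C_H$, visits $xw_0,xw_1,\dots,xw_{k-1}$ (where $w_0,\dots,w_{k-1}$ are the vertices of $C_H$) with the same cyclic sequence of edge labels as $C_H$, and hence also satisfies the hypothesis: at each of its vertices, the incoming or the outgoing label commutes with $r_n$. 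Choosing one translate per coset gives $m=[G:H]$ pairwise disjoint cycles covering $V(\Gamma(G,S))$.

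Next I would record the coset-level picture of the $r_n$-edges. An edge $(v,vr_n)$ joins the coset of $v$ to the coset of $vr_n$; since $T$-edges stay inside a coset while $T\cup\{r_n\}$ generates $G$, contracting each coset yields a connected multigraph on the $m$ cosets whose edges are exactly the $r_n$-edges. Fix a spanning tree of this multigraph, rooted at $H_0=H$, and set $C_0=C_H$. The technical heart is the claim that \emph{for any adjacent cosets $H_a,H_b$ and any translate $C_a$ of $C_H$ on $H_a$, there is an $r_n$-edge $(v,vr_n)$ with $v\in H_a$, $vr_n\in H_b$, whose source $v$ has outgoing edge in $C_a$ labeled by a generator that commutes with $r_n$.} To see this, let $X=\{v\in H_a:vr_n\in H_b\}$, which is nonempty by adjacency, and suppose every $v\in X$ has a non-commuting outgoing label in $C_a$; then by the hypothesis its incoming label $t_v$ commutes with $r_n$, so the predecessor $\rho(v)=vt_v^{-1}$ of $v$ in $C_a$ satisfies $\rho(v)r_n=vr_nt_v^{-1}\in H_b$, i.e. $\rho(X)\subseteq X$. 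As $\rho$ is the shift of a single cycle, $X$ must be all of $H_a$, which forces every edge of $C_a$, hence of $C_H$, to have a non-commuting label, contradicting the hypothesis since $k\geq 2$.

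I would then process the cosets in breadth-first order along the tree. On reaching a coset $H_l$ with parent $H_p$ (already processed, with cycle $C_p$), pick a good linking edge $(v,vr_n)$ as in the claim, let $s$ be the commuting outgoing label of $C_p$ at $v$, and choose the translate $C_l$ of $C_H$ on $H_l$ whose outgoing label at $vr_n$ is also $s$ --- possible because $s$ occurs among the labels of $C_H$. Then $(v,vs)$ is an edge of $C_p$, $(vr_n,vr_ns)$ an edge of $C_l$, $vr_n=v\,r_n$, and $r_n$ commutes with $s$, so $C_p$ and $C_l$ have the commutative joining property with respect to $r_n$. Since distinct tree-neighbors of a coset give rise to disjoint source sets, the edges reserved for the various links are pairwise distinct, so the construction of Figure~\ref{fig:commjoin} applies once per tree edge, each application fusing a coset cycle into the growing cycle while leaving the still-unused reserved edges intact --- precisely as in the proof of Proposition~\ref{prop:multi-join}. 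After $m-1$ fusions we are left with a single cycle through all of $\Gamma(G,S)$.

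The step I expect to be the main obstacle is the claim in the second paragraph: guaranteeing that, whatever translate of $C_H$ we have been forced to place on a coset, some $r_n$-edge leading from it to the required neighbor starts at a vertex whose cycle edge commutes with $r_n$. This is exactly what the ``consecutive edges'' hypothesis buys: it ensures every vertex of every translate carries a commuting incident label, which is what makes the predecessor argument close. A secondary point is the bookkeeping that each coset cycle keeps enough distinct such edges for its parent link and all its child links, which holds because those links occur at $r_n$-edges with pairwise disjoint sources.
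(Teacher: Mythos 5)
Your argument is correct, and its skeleton --- left cosets of $H$, a left-translate of $C_H$ on each coset, splicing the resulting disjoint cycles along $r_n$-edges via the commutative joining property --- is the same as the paper's. The genuine difference is in how a usable junction is located. The paper grows one cycle $C$ greedily, coset by coset, maintaining the invariant that any two consecutive non-$r_n$ labels of $C$ occur consecutively in $C_H$; at an arbitrary vertex $g$ of $C$ with $gr_n$ in a fresh coset it then uses whichever of the two incident labels commutes with $r_n$, treating the incoming and outgoing cases separately. You instead prove the sharper statement that the set $X$ of vertices of a coset carried by $r_n$ into a prescribed neighbouring coset always contains one whose \emph{outgoing} label commutes with $r_n$: if not, the hypothesis forces every incoming label on $X$ to commute, so the predecessor map of the cycle preserves $X$, whence $X$ is the whole coset and every label of $C_H$ fails to commute with $r_n$ --- a contradiction. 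This closure argument eliminates both the invariant and the case split, at the cost of the spanning-tree bookkeeping needed to keep the reserved junction edges pairwise distinct (which your disjoint-source-set observation correctly supplies, since the parent and the children of a coset in the tree are distinct neighbours, and the joining construction only ever deletes the two reserved edges of the junction being processed). Both routes are sound; the paper's is lighter on bookkeeping because its invariant lets it link at any available vertex, while your existence claim is a self-contained combinatorial fact about such cycles that could be reused independently.
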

\begin{proof}

 Let $\{H_l \mid l = 1, \dots, k\}$ be the set of left cosets of $H$ with $H_1 = H$. Let $\Gamma_l$ denote the subgraph of $\Gamma(G,S \setminus \{r_n\})$ 
 with vertex set $H_l$. Each $\Gamma_l$ is isomorphic to $\Gamma(H,S \setminus \{r_n\})$. 
 Since $\Gamma(G,S)$ is connected, the set of subgraphs $\Gamma_l$ can be linked using edges labelled $r_n$.
 To start, set $C = C_H$.  
 
 Inductive hypothesis: $\Gamma(G,S)$ contains a cycle $C$ such that
 \begin{itemize}
 \item the vertex set of $C$ is the union of some subcollection of the left cosets of $H$, and
 
 \item if $r_i$ and $r_j$ are consecutive edge labels in $C$ and neither is 
 $r_n$ then $r_i$ and $r_j$ also appear as consecutive edge labels in the cycle $C_H$. 
 
 \end{itemize}
 
 The inductive hypothesis is trivially satisfied when $C=C_H$.
 If the vertex set of $C$ is all of $G$ we are finished. If not, there is an index $l$ such that
 $C$ and $\Gamma_l$ are disjoint and linked by $r_n$. Find consecutive 
 edges $(g^\prime, g^\prime r_i)$ and $(g^\prime r_i, g^\prime r_i r_j) = (g, g r_j)$ in $C$ and a vertex $g_l \in \Gamma_l$ 
 with $g r_n = g_l$. Observe that neither $r_i$ nor $r_j$ is $r_n$ since $r_n$ is of order 2.  
 
  From the inductive hypothesis the edge labels $r_i$ and $r_j$ must appear as consecutive edge labels in $C_H$. It follows that
  either $r_i$ or $r_j$ commutes with $r_n$. Assume that $r_n$ and $r_j$ commute; the case where $r_n$ commutes with $r_i$ is handled in similar fashion.
  
  The cycle $C_H$ must contain an edge of the form $(h, h r_j)$ for some $h \in H$. It follows that 
  the cycle $C_l = g_lh^{-1}C_H$ is a spanning cycle for $\Gamma_l$ and
  contains the edge $(g_l, g_l r_j)$. Since $C_l$ contains $(g_l,g_lr_j)$ and $gr_n=g_l$, we conclude that  $C$ and $C_l$ have the commutative joining property with respect to $r_n$ and 
  can be combined as described above, creating a 
 new cycle containing all of the vertices from $C$ and $C_l$, which we again call $C$.
 
 The new vertex set $C$ is a union of cosets of $H$. 
 Consecutive edge labels in $C$, neither of which is $r_n$, signify
 consecutive edges coming from either $C_l$ or the previous version of $C$ and, 
 in either case,  must appear as consecutive edge labels in $C_H$. The inductive hypothesis remains valid.
 
 Continuing in this manner we arrive at a cycle whose vertex set is the union of all of the left cosets of $H$ and is, therefore, a Hamiltonian cycle in $\Gamma(G,S)$.
\end{proof}

In the preceding proof, any choice of an edge labeled $r_n$ connecting $C$ and $\Gamma_l$ will work in the inductive step. 
In general, there will be many such edges.
If the conditions on $C_H$ are relaxed to allow the cycle to contain instances of consecutive edge labels signifying generators, neither of which commute with $r_n$, then the inductive step fails unless the edge connecting $C$ and $\Gamma_l$ can be chosen to avoid these ``bad'' portions of the copies of $C_H$ that have accumulated in $C$.

\begin{definition}
\label{def:badness}
Let $G$  be a finite group with generating set $S$. Let $C$ be a cycle in $\Gamma(G, S)$. For $r$ in $S$ define the {\it badness} of $C$ with respect to $r$, denoted
$bad(C,r)$, to be the number of instances in $C$ of consecutive edge labels $s$ and $t$ where $r$ fails to commute with both of the generators $s$ and $t$.
\end{definition}

\begin{corollary}
\label{cor:badness}

Let $G$ be a finite group with generating set $S=\{r_1,\dots,r_n\}$, where $r_n$ is of order two. Let $H=\langle r_1,\dots,r_{n-1}\rangle$.  
Let $\{\Gamma_l\}$ denote the components of $\Gamma(G, S \setminus \{r_n\})$. 
Denote by $c(i,j)$ the number of distinct edges in $\Gamma(G,S)$ that are  labeled $r_n$ and connect $\Gamma_i$ and $\Gamma_j$. 
If $\Gamma(H, S \setminus \{r_n\})$ contains a spanning cycle $C_H$ with $bad(C_H, r_n) < c(i,j)$ for all positive $c(i,j)$ then
  $\Gamma(G,S)$ contains a Hamiltonian cycle.
    
 \end{corollary}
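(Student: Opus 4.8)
The plan is to imitate the inductive construction in the proof of Proposition~\ref{prop:lifting}: grow a cycle $C$ in $\Gamma(G,S)$ by absorbing the components $\Gamma_l$ of $\Gamma(G,S\setminus\{r_n\})$ one at a time, each absorption being a commutative joining along an $r_n$-edge with a fresh translate of $C_H$ placed on the component being absorbed. The only substantive change is that the structural hypothesis on $C_H$ used in Proposition~\ref{prop:lifting} — that of any two consecutive edge labels at least one commutes with $r_n$ — is weakened to the numerical hypothesis $bad(C_H,r_n)<c(i,j)$, which is then invoked at the attachment step to produce a usable joining edge. As usual the components are the left cosets $gH$ of $H=\langle r_1,\dots,r_{n-1}\rangle$, the edges of $\Gamma(G,S)$ joining distinct components are exactly the $r_n$-labeled ones, and connectivity of $\Gamma(G,S)$ guarantees that whenever $C$ spans a proper union of components there is an $r_n$-edge from an already-absorbed $\Gamma_i$ to an unabsorbed $\Gamma_l$, so $c(i,l)\ge 1$.

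The inductive hypothesis I would carry is: the vertex set of $C$ is a union of components; within each absorbed $\Gamma_i$ the edges of $C$ form a translate of $C_H$ from which some edges, each labeled by a generator that commutes with $r_n$, have been deleted (these deletions being the $\Gamma_i$-sides of earlier commutative joinings); and every vertex of $C$ that is incident to an $r_n$-edge of $C$ sends that edge into an already-absorbed component. Two easy remarks make this usable. First, a translate of $C_H$ has the same multiset of consecutive-label pairs as $C_H$, hence the same badness with respect to $r_n$. Second, deleting from a cycle an edge whose label commutes with $r_n$ never raises the badness: at each of the two endpoints of the deleted edge that commuting label gets replaced by $r_n$, and since $r_n$ commutes with itself the resulting consecutive pair is not bad, while the pair that was there before was already not bad (it contained the commuting label). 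Consequently every vertex that is bad in $C$ is one of the at most $bad(C_H,r_n)$ bad vertices inherited from the copy of $C_H$ in its component; in particular each absorbed $\Gamma_i$ contains at most $bad(C_H,r_n)$ vertices that are bad in $C$.

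For the attachment step, pick an absorbed $\Gamma_i$ and an unabsorbed $\Gamma_l$ with $c(i,l)\ge 1$. The $c(i,l)$ edges labeled $r_n$ between $\Gamma_i$ and $\Gamma_l$ meet $\Gamma_i$ in $c(i,l)$ distinct vertices $g$, and none of these can be incident to an $r_n$-edge of $C$: such a vertex would send its $r_n$-edge to $g\,r_n\in\Gamma_l$, contradicting that $\Gamma_l$ is not yet absorbed. Hence at each of these $c(i,l)$ candidates both incident $C$-edges lie in $\Gamma_i$, so by the inductive hypothesis fewer than $c(i,l)$ of the candidates are bad in $C$, and we may choose one, $g$, that is not bad. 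One of its two $C$-edges, say $(g,gs)$, then has $s$ commuting with $r_n$; moreover $s\in H$ and $C_H$ uses the label $s$, since $s$ already appears in the copy of $C_H$ inside $\Gamma_i$. Because $s$ commutes with $r_n$ we have $gs\cdot r_n=g\,r_n\cdot s\in\Gamma_l$, so we may choose the translate $C_H^{(l)}$ of $C_H$ supported on $\Gamma_l$ that contains the edge $(g\,r_n,g\,r_n s)$ — an unconstrained choice, as $\Gamma_l$ has not been touched. Then $C$ and $C_H^{(l)}$ satisfy the commutative joining property with respect to $r_n$ (Definition~\ref{def:cjp}) along $(g,gs)$ and $(g\,r_n,g\,r_n s)$, and combining them deletes these two edges and inserts the $r_n$-edges $(g,g\,r_n)$ and $(gs,gs\,r_n)$, producing a cycle spanning $C\cup\Gamma_l$. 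The inductive hypothesis is restored: inside $\Gamma_i$ we have performed one more legal deletion from a translate of $C_H$, inside $\Gamma_l$ we have $C_H^{(l)}$ with one legal deletion, and the two new $r_n$-edges both join $\Gamma_i$ and $\Gamma_l$, now both absorbed. Iterating until every component is absorbed yields a Hamiltonian cycle of $\Gamma(G,S)$.

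The crux — and the only place the hypothesis is used — is the counting in the attachment step: one must be certain that the modifications and $r_n$-edges already built into $C$ do not obstruct all $c(i,l)$ candidate attachment vertices. What makes it go through is the observation that any candidate for an attachment into the \emph{fresh} component $\Gamma_l$ has its $r_n$-neighbor in $\Gamma_l$, so it cannot be an endpoint of an $r_n$-edge already present in $C$ (all of which point into previously absorbed components); thus the only obstructions among the $c(i,l)$ candidates are the genuinely bad vertices inherited from $C_H$, of which there are fewer than $c(i,l)$. A secondary point to handle carefully is that the translate of $C_H$ on each component is pinned down exactly once, at the moment that component is first absorbed, so that no two joinings ever impose conflicting demands on it.
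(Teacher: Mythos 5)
Your proof is correct and follows essentially the same route as the paper's: it repeats the inductive absorption of components from Proposition~\ref{prop:lifting}, using the bound $bad(C_H,r_n)<c(i,l)$ to select an undisturbed, non-bad attachment vertex in a previously incorporated copy of $C_H$. Your two supporting observations --- that candidate attachment vertices into a fresh component cannot already be endpoints of $r_n$-edges of $C$, and that the deletions performed by commutative joinings do not raise badness --- simply make explicit what the paper's shorter argument leaves implicit.
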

\begin{proof}
  The proof proceeds exactly as the proof of Proposition \ref{prop:lifting}.   Each constructed $C_l$ 
  has the same sequence of edge labels as $C_H$ so that $bad(C_l,r_n) = bad(C_H,r_n)$.
  In the inductive step, 
  when a $\Gamma_l$ is chosen that is connected to $C$ by an edge labeled $r_n$, there must be $c(i,l)$ different vertices in $C$, 
  coming from some previously
  incorporated $C_i \subset \Gamma_i$,
  that connect $C$ to $\Gamma_l$ by applying $r_n$. The edges in $C$ that pass through these vertices come from $C_i$ and bear consecutive
  edge labels consistent with the sequence of edge labels in $C_H$. Since $c(i,l)$ exceeds $bad(C_H,r_n)$, a suitable choice of connecting edge can be made.
\end{proof}

\section{Application to Cayley graphs of complex reflection groups}
\label{sec:cxgroupgraphs}

Examining the diagrams and corresponding presentations in~\cite{BMR98} or the matrix forms of the generators given in Section~\ref{sec:background} shows that in
\begin{align*}
G(d,1,n) & \hbox{ with } d \geq 2, n \geq 3, \\
G(e,e,n) & \hbox{ with } e \geq 2, n \geq 4, \hbox{ and} \\
G(de,e,n)& \hbox{ with } d,e \geq 2, n \geq 4,
\end{align*}
$r_{n-1}$ commutes with all generators other than $r_{n-2}$.
Thus Proposition~\ref{prop:lifting} will apply to treat the induction step in the proof of Theorem~\ref{th:cximprimHam} for these cases.
It will be necessary to separately address the remaining infinite base cases,
\begin{align*}
G(d,1,2) & \hbox{ with } d \geq 2, \\
G(e,e,3) & \hbox{ with } e \geq 2, \\
G(de,e,2) & \hbox{ with } d,e \geq 2, \hbox{ and } \\
G(de,e,3) & \hbox{ with } d,e \geq 2. 
\end{align*}
 
In~\cite{BMR98} the case of $e=2$ in $G(de,e,n)$ is treated separately (though some indication was given it could be combined, we never resolved an issue concerning the double braid relation).
Here we treat separately the cases of $d=2$ in $G(de,e,2)$ and $G(de,e,3)$ but do not distinguish $e=2$.
Explicit Hamiltonian cycles are given for the families $G(d,1,2)$, $G(2e,e,2)$, and $G(e,e,3)$ in Lemmas~\ref{le:G(d,1,2)}, \ref{le:G(2e,e,2)}, and~\ref{le:G(e,e,3)}.
In Lemma~\ref{le:G(de,e,2)} we construct a Hamiltonian cycle in the graph for $G(de,e,2)$ by applying the flipping process to an explicit Hamiltonian path.
Lemmas~\ref{le:G(de,e,3)} and~\ref{le:G(2e,e,3)} achieve the lifting of the cycles given for $G(de,e,2)$ and $G(2e,e,2)$ to $G(de,e,3)$ and $G(2e,e,3)$ respectively.

Recall that we are using right Cayley graphs and all arithmetic is resolved with the appropriate modulus.
In the first two lemmas denote $r_1$ by $r$ and in all proofs denote the elements of the permutation group $S_3$ by:
%\begin{alignat*}{3}
%\sigma_0&=1 &\quad \sigma_2&=(2\,\,3) & \quad \sigma_4&=(1\,\,2\,\,3) \\
%\sigma_1&=(1\,\,2) & \quad \sigma_3&=(1\,\,3) & \quad \sigma_5&=(1\,\,3\,\,2).
%\end{alignat*}
\[\sigma_0=1, \quad \sigma_1=(1\,\,2), \quad \sigma_2=(2\,\,3), \quad \sigma_3=(1\,\,3), \quad \sigma_4=(1\,\,2\,\,3), \quad \sigma_5=(1\,\,3\,\,2).
\]

\begin{lemma}
\label{le:G(d,1,2)}
Let $G=G(d,1,2)$ with $d\geq 2$, which has presentation 
\[G=\langle t,r \mid t^d=r^2=1, trtr=rtrt \rangle.\] 
If $B=[[t]^{d-1},r]$, then $B^{2d}$
is a Hamiltonian cycle in $\Gamma(G,\{r,t\})$.
\end{lemma}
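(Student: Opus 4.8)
The plan is to apply Lemma~\ref{lem:l2.6} together with Remark~\ref{rem:factorgroup} to the block $B=[[t]^{d-1},r]$, which has length $k=d$. Since $|G(d,1,2)|=2d^2=(2d)\cdot k$, and writing $v=s_1\cdots s_k=t^{d-1}r$, $w_0=1$, and $w_i=t^i$ for $0<i<d$, it suffices by the Remark to establish two things: first, that $v$ has order exactly $2d$ (so that $B^{2d}$ is a closed walk and $H:=\langle v\rangle$ has index $d=k$ in $G$); and second, that $w_0,w_1,\dots,w_{d-1}$ lie in $d$ distinct left cosets of $H$. Granting these, $B^{2d}\Sharp$ is a self-avoiding walk through all $2d^2$ vertices and $B^{2d}$ closes up, i.e.\ it is a Hamiltonian cycle.

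For the first point I would work in the standard monomial representation, where $t=(1,0\mid \sigma_0)$ and $r=(0,0\mid \sigma_1)$, so that $v=t^{d-1}r=(-1,0\mid \sigma_1)$. A short induction on $m$, using the multiplication rule $(a_1,a_2\mid \sigma)(b_1,b_2\mid \tau)=(a_1+b_{\sigma(1)},\,a_2+b_{\sigma(2)}\mid \sigma\tau)$, gives
\[
v^{2m}=(-m,\,-m\mid \sigma_0),\qquad v^{2m+1}=(-(m+1),\,-m\mid \sigma_1).
\]
All odd powers have nontrivial permutation part, while $v^{2m}=1$ precisely when $d\mid m$; hence the order of $v$ is exactly $2d$. (Equivalently, $v^2=\zeta^{-1}I$ is a central element of order $d$ while $v$ itself is not central, which already forces the order of $v$ to be $2d$.)

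For the second point, since $H$ is a subgroup it is enough to show $t^{\ell}\notin H$ for $1\le \ell\le d-1$, since this gives $w_i^{-1}w_j=t^{j-i}\notin H$ for all $0\le i<j\le d-1$. From the formula above, the elements of $H$ with trivial permutation part are exactly the $v^{2m}=(-m,-m\mid \sigma_0)$; equating this with $t^{\ell}=(\ell,0\mid \sigma_0)$ forces $m\equiv 0$ and hence $\ell\equiv 0\pmod d$. Thus $t^{\ell}\notin H$ for $1\le \ell\le d-1$, and so $w_0,\dots,w_{d-1}$ represent all $d=[G:H]$ left cosets of $H$. Remark~\ref{rem:factorgroup} then yields the conclusion.

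There is no deep obstacle here; the only step requiring care is the bookkeeping in the semidirect product, and in particular confirming that the order of $v$ is exactly $2d$ rather than a proper divisor of $2d$ — this is what guarantees that $B^{2d}$ returns to the identity without having revisited a vertex along the way.
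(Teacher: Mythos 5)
Your proposal is correct and follows essentially the same route as the paper's proof: both work in the tuple/monomial representation, compute $v=t^{d-1}r=(-1,0\mid\sigma_1)$ and show it has order $2d$, and then verify the self-avoidance criterion of Lemma~\ref{lem:l2.6} (you via the equivalent coset reformulation in Remark~\ref{rem:factorgroup}, the paper by checking $v^mw_i=w_j$ directly with the same parity argument). The two verifications amount to the same computation, so there is nothing substantive to add.
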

 
\begin{proof} 	
The elements of $G(d,1,2)$ are of the form 
\[\{(a,b \,\mid\, \sigma) \mid 0 \leq a,b < d \hbox{ and } \sigma \in S_2=\{\sigma_0,\sigma_1\}\}.\]
Entries in the tuples representing vertices are determined up to congruence modulo $d$. 
Denote the walk $B^{2d}\Sharp$ by $P$.
The number of vertices visited by $P$ is $2d^2$ which is the order of the group $G(d,1,2)$. 
It suffices to show that $B^{2d}$ is closed and $P$ is self-avoiding.

Let $v = t^{d-1}r$ and $w_i = t^i$ for $0 \leq i < d$. 
Since $t$ is of order $d$, the vertices in $w_0, \dots, w_{d-1}$ are distinct. 
In tuple notation, $t$ is $(1,0 \mid \sigma_0)$ and $r$ is $(0,0 \mid \sigma_1)$. 
From this, 
\[v = t^{d-1}r = (d-1,0 \mid \sigma_0)(0,0 \mid \sigma_1) = (-1,0 \mid \sigma_1),\] 
$v^2 = (-1,-1 \mid \sigma_0)$, and $v$ is of order $2d$, establishing
that $B^{2d}$ is closed.

Suppose $0 \leq m < 2d$, $0 \leq i,j < d$ and $v^m w_i = w_j$. Since $w_i = (i, 0 \mid \sigma_0)$ and $w_j = (j, 0 \mid \sigma_0)$, $m$ must be even, say $m = 2k$. 
Then $v^m w_i = w_j$ becomes $(-k + i,-k \mid \sigma_0) = (j,0 \mid \sigma_0)$. 
It follows that $k \equiv 0 \bmod{d}$ and $i \equiv j \bmod{d}$. 
Since $0 \leq i,j,k < d$ it must be that $k = 0$ and  $i = j$. 
Lemma 3.1 applies to allow us to conclude that $P = B^{2d} \Sharp$ is self-avoiding.
\end{proof}

\begin{lemma}
\label{le:G(2e,e,2)}
Let $G=G(2e,e,2)$ with $e\geq 3$, which has presentation
\[G=\langle r,s,t \mid r^2=s^2=t^2=1,\, tsr=srt,\, \underbrace{rtsrsrs\cdots}\limits_{e+1 \; \mathrm{factors}}=\underbrace{tsrsrs\cdots}\limits_{e+1 \; \mathrm{factors}} \rangle.\]  
If $B = [[[r,s]^{e-1}],r,t]$ then the walk $B^4$ is a Hamiltonian cycle in $\Gamma(G,\{r,s,t\})$.
\end{lemma}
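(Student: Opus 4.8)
The plan is to mimic the structure of the proof of Lemma~\ref{le:G(d,1,2)}: exhibit $B$ as a "block" word, show the power $B^4$ is a closed walk by computing that $v=(rs)^{e-1}rt$ has order $4$ as an element of $G=G(2e,e,2)$, count that $4$ times the length of $B$ equals $|G|=d^ne^{n-1}n!=2^2\cdot e\cdot 2=8e$ wait --- here $d=2$, $n=2$, so $|G(2e,e,2)|=d^n e^{n-1} n! = 2^2\cdot e^{1}\cdot 2 = 8e$, and the length of $B$ is $2(e-1)+2=2e$, so $4\cdot 2e = 8e=|G|$, which checks out --- and then invoke Lemma~\ref{lem:l2.6} (in the form of Remark~\ref{rem:factorgroup}) to reduce self-avoidance to the statement that the partial products $w_0,\dots,w_{2e-1}$ of $B$ hit each left coset of $H=\langle v\rangle$ exactly once.

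First I would set up tuple notation: $r=(0,0\mid\sigma_1)$, $s=(-1,1\mid\sigma_1)$, $t=(e,0\mid\sigma_0)$, where here all entries are read modulo $de=2e$ and the constraint is $a_1+a_2\equiv 0\bmod e$. Then $rs=(0,0\mid\sigma_1)(-1,1\mid\sigma_1)=(1,-1\mid\sigma_0)$ (a diagonal element), so $(rs)^{e-1}=(e-1,-(e-1)\mid\sigma_0)=(e-1,1-e\mid\sigma_0)$, and one computes $v=(rs)^{e-1}rt=(e-1,1-e\mid\sigma_0)(0,0\mid\sigma_1)(e,0\mid\sigma_0)$. Carrying this out gives $v$ as some $(\alpha,\beta\mid\sigma_1)$, hence $v^2$ is diagonal, $v^2=(\alpha+\beta,\alpha+\beta\mid\sigma_0)$ presumably equal to $(-1,-1\mid\sigma_0)$ or similar, and then $v^4=1$ while $v,v^2,v^3\ne 1$, establishing that $B^4$ is closed and $H=\langle v\rangle$ has order $4$. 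This is a routine but essential calculation; I would double-check the entry arithmetic carefully because the modulus is $2e$ while the subgroup condition is mod $e$.

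The substantive step is the coset-counting. The partial products of $B=[(rs)^{e-1},r,t]$ are: $w_0=1$; for $0\le k\le e-1$, the products $(rs)^k=(k,-k\mid\sigma_0)$ and $(rs)^k r=(k,-k\mid\sigma_1)$ (interleaved as we read the word $r,s,r,s,\dots$); then the final extra $w$'s after appending $r$ and before $t$. I would list these $2e$ group elements explicitly and, using the description of $H=\langle v\rangle=\{1,v,v^2,v^3\}$ with $v^2$ diagonal, show that no two of the $w_i$ lie in the same left coset $w_iH$, equivalently that $w_j^{-1}w_i\notin H\setminus\{1\}$ for $i\ne j$. Because $|H|=4$ and there are $2e$ cosets, and $2e\cdot 4=8e=|G|$, hitting each coset at most once forces hitting each exactly once. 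The main obstacle I anticipate is bookkeeping: the word $(rs)^{e-1}$ alternates two non-commuting involutions, so the intermediate partial products alternate between "diagonal, $\sigma_0$" and "anti-diagonal, $\sigma_1$" types, and one must track both the permutation part and the two coordinates mod $2e$ simultaneously while comparing against the four explicit elements of $H$; getting the off-by-one in the exponent of $(rs)$ right, and handling the two tail vertices created by the trailing $r$ (and the implicit extra step), is where an error would most naturally creep in. Once the coset condition is verified, Lemma~\ref{lem:l2.6} and Remark~\ref{rem:factorgroup} immediately give that $B^4$ is a Hamiltonian cycle in $\Gamma(G,\{r,s,t\})$.
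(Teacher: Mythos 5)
Your proposal follows essentially the same route as the paper: tuple arithmetic, the count $4\cdot 2e=8e=|G|$, showing $v=(rs)^{e-1}rt$ has order $4$, and Lemma~\ref{lem:l2.6} (via Remark~\ref{rem:factorgroup}) to reduce self-avoidance to the injectivity/coset condition, which the paper verifies by splitting on the parity of $m$ and solving congruences rather than by listing the $2e$ partial products against the four elements of $\langle v\rangle$. The only detail to correct when you execute the computation is the value of $v^2$: the paper gets $v=(e-1,1\mid\sigma_1)$ and $v^2=(e,e\mid\sigma_0)$ (not $(-1,-1\mid\sigma_0)$, which would have order $2e$), and it is precisely the entry $e$ being $2$-torsion mod $2e$ that makes $v$ have order $4$.
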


\begin{proof} 	
The elements of $G(2e,e,2)$ are of the form 

\[\{(a,b \,\mid\, \sigma) \mid 0 \leq a,b < 2e, a + b \equiv 0 \bmod{e} \hbox{, and } \sigma \in S_2=\{\sigma_0,\sigma_1\}\}.\]

Entries in the tuples representing vertices are determined up to congruence modulo $2e$. 
We show that $B^4$ is closed and $P=B^4\Sharp$ is self-avoiding.

Let $w_{2i} = (rs)^i$ and $w_{2i+1} = (rs)^i r$ for $0 \leq i < e$. Let $v = (rs)^{e-1}rt = (e - 1, 1 \mid \sigma_1)$. 
It follows that $v^2 = (e, e \mid \sigma_0)$ so that $v$ is of order $4$ and $B^4$ is closed. 
The number of vertices visited by $P$ is $8e$ which is the order of the group $G(2e,e,2)$. 
It suffices to show that $P$ is self-avoiding, which we establish using Lemma \ref{lem:l2.6}.

Let $0 \leq m < 4$ and suppose we have $v^m w_i = w_j$. 
We may assume that $j$ is even, otherwise, right-multiply by $r$. 
It follows that the resulting $m$ and $i$ must have the same parity. 
There are two cases to consider:

Case 1:  $m$ is even. Set $m = 2l$ for $l = 0 \text{ or } 1$ and $i = 2 l_i$, $j = 2 l_j$. 
Then $v^m w_i = w_j$ becomes 
\[(l e + l_i, l e - l_i \mid \sigma_0) = (l_j, -l_j \mid \sigma_0).\]

This yields a system of two congruences that imply $l_i - l_j \equiv 0 \bmod{e}$, forcing $l_i = l_j$ and $i = j$.
It follows that $l$ must be $0$ and so $m = 0$.

Case 2:  $m$ is odd. Set $m = 2 l+1$ for $l = 0 \text{ or } 1$ and $i = 2l_i+1$, $j = 2 l_j+1$. 
Then $v^m w_i = w_j$ becomes 
\[((l+1)e-l_i-1, le+l_i+1 \mid \sigma_0) = (l_j, -l_j \mid \sigma_0) \]
yielding the system of congruences
\begin{align*}
    e+le - l_i - 1 &\equiv \phantom{-}l_j \bmod{2e}\\
    l e + l_i + 1 &\equiv -l_j \bmod{2e}
  \end{align*}
which has no solution.
Lemma 3.1 applies to allow us to conclude that $P$ is self-avoiding.
 \end{proof}
 
In the next lemma, to avoid subscripts, continue to denote $r_1$ by $r$ and denote $r_2$ by $q$. 
 
\begin{lemma}
\label{le:G(e,e,3)}
Let $G=G(e,e,3)$ with $e\geq 2$, which has presentation 
\[G=\langle s,r,q \mid s^2=r^2=q^2=1,\, sqs=qsq,\, rqr=qrq, qsrqsr=srqsrq,\, \underbrace{srs\cdots}\limits_{e \; \mathrm{factors}}=\underbrace{rsr\cdots}\limits_{e \; \mathrm{factors}}. \rangle\] 
Let $A = [q,s,q,r,q,r]$ and $B=[A^e \Sharp, s]$.  
Then $B^e$ is a Hamiltonian cycle in $\Gamma = \Gamma(G,\{s,r,q\})$.
\end{lemma}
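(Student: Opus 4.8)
The plan is to apply Lemma~\ref{lem:l2.6} and Remark~\ref{rem:factorgroup} exactly as in the proofs of Lemmas~\ref{le:G(d,1,2)} and~\ref{le:G(2e,e,2)}: set $v = s_1 s_2 \cdots s_k$ where $[s_1,\dots,s_k] = B = [A^e\Sharp, s]$, verify that $v$ has order $e$ so that $B^e$ is a closed walk, check that $|G(e,e,3)| = e^2 \cdot 6 = ek$ so that $B^e\Sharp$ has the right length, and then confirm the self-avoidance condition. The group elements are $(a_1,a_2,a_3 \mid \sigma)$ with $a_i \in \ZZ_e$, $a_1+a_2+a_3 \equiv 0 \bmod e$, and $\sigma \in S_3$; note $|G| = e^3 \cdot 6 / e = 6e^2$ while $B$ has length $k = 6e - 1 + 1 = 6e$, so indeed $|G| = e \cdot k$ and $B^e$ visits exactly $|G|$ vertices once we prove self-avoidance.

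First I would compute the partial products $w_0 = 1, w_1, \dots, w_{k-1}$ of $B$ and the element $v$ in tuple notation. Writing $A = [q,s,q,r,q,r]$, one traverses the six permutations $\sigma_0,\dots,\sigma_5$ in some order while accumulating entries in $\ZZ_e$ (recall $s = (-1,1,0\mid\sigma_1)$, $r = q = (0,0,0\mid \text{transposition})$ in the relevant coordinates — here $r = r_1 = (0,0,0\mid(1\,2))$ and $q = r_2 = (0,0,0\mid(2\,3))$ and $s = (-1,1,0\mid(1\,2))$). Since among the generators only $s$ has nontrivial $\ZZ_e$-part, traversing one copy of $A$ permutes coordinates and adds $s$'s contribution once; I expect $A$ itself (all six letters) to act as a pure coordinate permutation composed with one shift, and the extra trailing $s$ in $B$ to supply the ``twist'' that makes $v$ have order exactly $e$. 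I would compute $v$ explicitly, show $v^e = 1$ and $v^m \neq 1$ for $0 < m < e$, establishing closure.

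The main obstacle, as in the earlier lemmas, is verifying the self-avoidance condition of Lemma~\ref{lem:l2.6}: for $0 \le m < e$, if $v^m w_i = w_j$ then $m = 0$ and $i = j$. Equivalently, by Remark~\ref{rem:factorgroup}, the partial products $w_0,\dots,w_{k-1}$ hit each left coset of $H = \langle v\rangle$ exactly once; since $[G:H] = 6e^2/e = 6e$ wait — $|H| = e$ so $[G:H] = 6e$, and $k = 6e$, so this is a tight counting condition with no slack, meaning the argument must be exact. I would organize the check by the $S_3$-component: the $w_i$ fall into six blocks according to which $\sigma_a$ appears, and within a fixed coset-of-$S_n$-type the condition $v^m w_i = w_j$ reduces to a system of congruences mod $e$ in the $\ZZ_e$-coordinates, to be shown to force $m=0$, $i=j$ — splitting into cases by the parity or residue of $m$ as needed, just as Case~1/Case~2 in Lemma~\ref{le:G(2e,e,2)}. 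The one subtlety to watch is the small case $e = 2$, where $S_3$-components may coincide in ways that collapse distinct $w_i$; I would check $e=2$ either by direct inspection of the $12$-vertex graph or by confirming the congruence argument still separates the blocks. Once self-avoidance is established, Lemma~\ref{lem:l2.6} gives that $B^e\Sharp$ is a self-avoiding path on all $|G|$ vertices, and closure of $B^e$ upgrades it to a Hamiltonian cycle in $\Gamma(G,\{s,r,q\})$.
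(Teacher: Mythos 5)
Your proposal is correct and follows essentially the same route as the paper's proof: apply Lemma~\ref{lem:l2.6} with $v=(1,-1,0\mid\sigma_0)$ of order $e$, note $|G|=6e^2=e\cdot 6e$, and verify self-avoidance by grouping the partial products $w_i$ into blocks of six according to their $S_3$-component (the six prefixes of $A$ realize all of $S_3$), which reduces the condition $v^m w_i=w_j$ to congruences mod $e$ in the $\ZZ_e$-coordinates that force $m=0$, $i=j$. The only remaining work is to carry out those explicit tuple computations (the paper does so, and no separate treatment of $e=2$ is needed since the $\sigma$-components of the six prefixes are independent of $e$).
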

\begin{proof}
The elements of $G(e,e,3)$ are of the form 
\[\{(a,b,c \,\mid\, \sigma) \mid 0 \leq a,b,c < e, a + b + c \equiv 0 \bmod{e} \hbox{, and } \sigma \in S_3=\{\sigma_0,\sigma_1,\dots,\sigma_5\}\}.\]
Entries in the tuples are determined up to congruence modulo $e$. 

Let $1=w_0, w_1, \dots, w_{k-1}$ denote the vertices, in sequence, of the walk $A^e \Sharp$ and let $v = w_{k-1}s$. Note that $k = 6e$. In tuple notation, 
\begin{alignat*}{2}
q &= (0,0,0 \mid \sigma_2), &\qquad qsqr &= (-1,0,1 \mid \sigma_5), \\
qs &= (-1,0,1 \mid \sigma_4), &\qquad qsqrq &= (-1,0,1 \mid \sigma_1), \\ 
qsq &= (-1,0,1 \mid \sigma_3), &\qquad qsqrqr &= (-1,0,1 \mid \sigma_0).
\end{alignat*}
Note that all $\sigma \in S_3$ appear in this list. 

The walk determined by $A^e\Sharp s$ ends in the vertex $v = (qsqrqr)^e rs = (1, -1, 0 \mid \sigma_0)$ and $v$ has order $e$. 
Suppose $v^m w_i = w_j$. Write $w_i = (qsqrqr)^{l_i} v_i$ and $w_j = (qsqrqr)^{l_j} v_j$ where $v_i, v_j \in \{1, q, qs, qsq, qsqr, qsqrq\}$. 
It follows that the tuples for $v_i$ and $v_j$ must contain the same $\sigma$ and consequently must be equal. Cancellation results in a reduction to either 
\[v^m (qsqrqr)^l = 1 \hbox{ or } v^m = (qsqrqr)^l\]
where $l = \vert l_i - l_j \vert $.

The first equation asserts that $(m - l,-m,l \mid \sigma_0) = (0,0,0 \mid \sigma_0)$ forcing $m = l = 0$ and $w_i = w_j$. The second equation asserts
that $(m, -m, 0 \mid \sigma_0) = (-l, 0, l \mid \sigma_0)$, again forcing $m = l = 0$ and $w_i = w_j$. Lemma 3.1 allows us to conclude that $P$ is self-avoiding.
The order of the group $G$ is $6e^2$ and $v$ is of order $e$ so $B^e$ is closed and is a Hamiltonian cycle.
\end{proof}

\begin{lemma}
\label{le:G(de,e,2)}
Let $G=G(de,e,2)$ with $d \geq 3, e\geq 2$, which has presentation
\[G=\langle t,s,r \mid r^2=s^2=t^d=1,\, tsr=srt,\, \underbrace{rtsrsrs\cdots}\limits_{e+1 \; \mathrm{factors}}=\underbrace{tsrsrs\cdots}\limits_{e+1 \; \mathrm{factors}} \rangle.\]  
Let $A = [[t]^{d-1},s]^{2d}$, $B=[A\Sharp, r]$ and $S = \{t, s, r\}$. 
Then  $B^e\Sharp$ is a Hamiltonian path in $\Gamma(G,S)$ and two successive flips of this path with respect to $s$ produce a Hamiltonian path $P$ such that $[P, r]$ is a Hamiltonian cycle in $\Gamma(G,S)$.
\end{lemma}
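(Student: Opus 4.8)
<br>

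<br>

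The plan is to first show that the long walk $B^e\Sharp$ is a Hamiltonian path, then to repair it into a cycle via two flips. Here $A = [[t]^{d-1},s]^{2d}$ is (up to the last edge) the Hamiltonian cycle $B^{2d}$ from Lemma~\ref{le:G(d,1,2)} sitting inside the parabolic subgroup $\langle t,s\rangle \cong G(d,1,2)$ (with $s$ playing the role of $r$), whose vertex set is the coset $\{(a,b\mid\sigma): a+b \text{ anything}\}$... more precisely, $\langle t,s\rangle$ has order $2d^2$ while $|G(de,e,2)| = 2d^2e$, so there are $e$ cosets and the idea is that $B = [A\Sharp, r]$ traverses one copy of this cycle, then steps via $r$ into the next coset, and $B^e$ should close up. So the first task is to compute, in tuple notation, $v = s_1\cdots s_k$ where $[s_1,\dots,s_k] = B$, verify $v$ has order $e$ (so $B^e$ is closed), and check the total vertex count: $B^e\Sharp$ visits $e\cdot(2d^2)$ vertices. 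Wait — that is $2d^2e = |G|$, so $B^e\Sharp$ has the right length; but $B^e$ closing would make it a Hamiltonian \emph{cycle}, not merely a path. The catch must be that $B^e$ is \emph{not} self-avoiding at the seams, or $[P,r]$ closing requires the flips; I would expect instead that $B^e\Sharp$ \emph{is} a self-avoiding Hamiltonian path but its endpoint is not adjacent to $1$ via an available generator, which is exactly what the flips fix. So Step 1: set up tuple notation for $t=(1,0\mid\sigma_0)$, $s=(-1,1\mid\sigma_1)$, $r=(0,0\mid\sigma_1)$, identify the $w_i$ as the vertices of $A\Sharp$ together with the intermediate coset representatives, compute $v$, and apply Lemma~\ref{lem:l2.6} to get self-avoidance of $B^e\Sharp$.

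Step 2 is the self-avoidance argument proper. Following the pattern of Lemmas~\ref{le:G(d,1,2)} and~\ref{le:G(2e,e,2)}, I would write each vertex of $B^e\Sharp$ as $v^m w_i$ with $0 \le m < e$ and $w_i$ ranging over the $2d^2$ vertices of $A\Sharp = B^{2d}\Sharp$ inside $\langle t,s\rangle$, and show $v^m w_i = w_j$ forces $m=0$, $i=j$. The key structural fact making this clean is that $v = (\text{cycle of }G(d,1,2))\cdot r$ will have the form $(c_1, c_2 \mid \sigma)$ where the $\sigma$-component and the "defect" $a_1 + a_2 \bmod e$ track how many copies of $r$ have been applied; since the $w_i$ all lie in the subgroup $\langle t,s\rangle$ where $a_1+a_2$ is constrained, matching up $v^m w_i$ and $w_j$ should immediately pin down $m \bmod e$, after which a short congruence computation (as in the two earlier lemmas) pins down everything else. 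I would organize this by parity of $m$ or by the value of $\sigma$ exactly as done above.

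Step 3 is the flipping. By the Definition preceding the lemma and the remark that flips preserve the vertex set and self-avoidance, $F(P,s)$ is again a self-avoiding Hamiltonian path whenever the relevant two endpoints of (a truncation of) $P$ are joined by an edge labelled $s$. I would identify the endpoint $u$ of $B^e\Sharp$ in tuple notation, observe why $u$ is not adjacent to $1$ by $r$, $s$, or $t$ directly, then locate a vertex $x$ earlier on the path with $x$ adjacent to $u$ via $s$ (i.e. $u = xs$ in $G$, using $s^2=1$); flipping at that point reverses the tail and produces a new endpoint $u'$. Then repeat: find a vertex adjacent to $u'$ via $s$ on the new path, flip again, landing at an endpoint $u''$ with $u'' r = 1$, i.e. $u'' = r$, so that $[P,r]$ closes the cycle. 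The honest content here is the bookkeeping: after the first flip the path is $[s_1,\dots,s_k, s, s_n^{-1},\dots,s_{k+2}^{-1}]$, so I must verify the second required adjacency on \emph{this} reordered path, not the original. Since $P$ is self-avoiding and the flip only reverses a contiguous block, the needed vertices are still present; I would exhibit the two flip points explicitly (they will be certain $v^m w_i$) and compute the three endpoints $u, u', u''$, checking $u'' = r$ in $G$.

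The main obstacle I anticipate is Step 3 — not the algebra of a single flip, which is routine, but verifying that the \emph{second} flip is legal after the first has scrambled the path's ordering, and arranging the two flip points so the final endpoint is precisely $r$ (equivalently $u'' r = 1$). One has to be careful that the edge used in the second flip wasn't consumed or reversed by the first. In practice this means choosing the first flip point in the original $B^e\Sharp$ and tracking exactly where it and the desired second flip vertex land after reversal; I'd guess both flip points lie in the last two copies of the $A$-block (the cosets reached by $r^{e-1}$ and $r^{e-2}$), where $s$-edges connect the appropriate vertices because $s$ acts within $G(d,1,2)$-cosets. Once the two flip vertices are pinned down the verification that $[P,r]$ is closed is just three tuple multiplications.
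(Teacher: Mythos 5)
Your plan follows the paper's proof essentially step for step: $A$ is the Lemma~\ref{le:G(d,1,2)} cycle (with $s$ in the role of $r$) spanning $H=\langle s,t\rangle\cong G(d,1,2)$, the blocks of $B^e\Sharp$ traverse the $e$ cosets of $H$ determined by $1, sr,\dots,(sr)^{e-1}$ joined by $r$-edges, and both $s$-flips are confined to the final coset (since cosets of $H$ are closed under right multiplication by $s$, which settles your worry about the second flip), producing the terminal vertex $(0,0\mid\sigma_1)=r$. One small correction to your Step~2: the invariant that pins down $m$ is not $a_1+a_2\bmod e$, which is $\equiv 0$ for \emph{every} element of $G(de,e,2)$, but rather $a_1\bmod e$ together with the permutation part (equivalently, which coset of $H$ the vertex lies in); with that substitution your congruence argument goes through exactly as described.
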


\begin{proof}

Let $H = \langle s, t \rangle$. 
An elementary argument establishes that the vertex set of the walk $A$ is $H$. 
The proof of Lemma 4.1 can be modified slightly to provide an argument showing that 
$A $ is a Hamiltonian cycle in $\Gamma(H,\{s,t\})$. It follows that $|H|=2d^2$ and $H$ has index $e$ in $G$.

The right cosets of $H$ are $H, Hsr, H(sr)^2, \dots , H(sr)^{e-1}$. To see this, suppose for some $0 \leq i, j < e$, that 
$(sr)^i$ and $(sr)^j$ determine the same right coset of H. Assume $i \geq j$. It follows
that $(sr)^{k} \in H$ where $k = i-j$. Elements of $H$ of the form $(a, b \mid \sigma_0)$ must arise as $(t^{d-1}s)^{2m}t^n$ for suitable $m, n$.
But $(sr)^k = (-k, k \mid \sigma_0)$ while $(t^{d-1}s)^{2m}t^n = (-me + ne, -me \mid \sigma_0)$.
If $(sr)^k=(t^{d-1}s)^{2m}t^n$ then $k\equiv -me \bmod{de}$ and hence  
$k$ must be congruent to 0 modulo $e$, but since $0 \leq k < e$, it must be that $k = 0$. 

The graph $\Gamma(G, S)$ is partitioned into $e$ subgraphs corresponding to the cosets of $H$. 
Each of these subgraphs has a spanning cycle that is obtained by applying $A$ starting at each of $1, sr, (sr)^2, \dots (sr)^{e-1}$. 
If the final $s$ in each of these cycles is replaced by $r$ the effect 
is the same as the concatenation of $A$ with $[s,r]$, effectively moving to the next coset. 
%appending $[s,r]$ to the right of $A$ or moving to the next coset. 
Since $s$ is of order 2, $[A,s,r] = [A \Sharp ,r] = B$. 
Thus, $B^e \Sharp$ utilizes each subgraph cycle to form a Hamiltonian path in $\Gamma(G, S)$. 

The Hamiltonian path $B^e \Sharp$ ends at the vertex $(-e,e \mid \sigma_1)$. It remains to show how to alter this path so that it ends in a vertex adjacent to $1$.
The cosets of $H$ are closed with respect to right multiplication by $s$ so that 
flipping the path $B^e \Sharp$ with respect to $s$ only alters the segment 
spanning the last coset of $H$. The first flip using $s$ reverses the cycle (path) in the last coset, and ends
at 
\[(-(e-1),e-1 \mid \sigma_0) t = (1, e-1 \mid \sigma_0).\] 
This reversal changes each $t$ to $t^{-1}$ in the portion of the path that is traversed backwards. 
The second  flip using $s$ produces a path that ends at
\[(1, e-1 \mid \sigma_0) s t^{-1} = (0, e \mid \sigma_1)t^{-1} = (0, 0 \mid \sigma_1),\] 
so the terminal vertex of the path is now adjacent, via $r$, to $1$. Refer to Figure~\ref{fig:flipping}. 
\end{proof}
%\vskip .3in
\begin{figure} [h!]
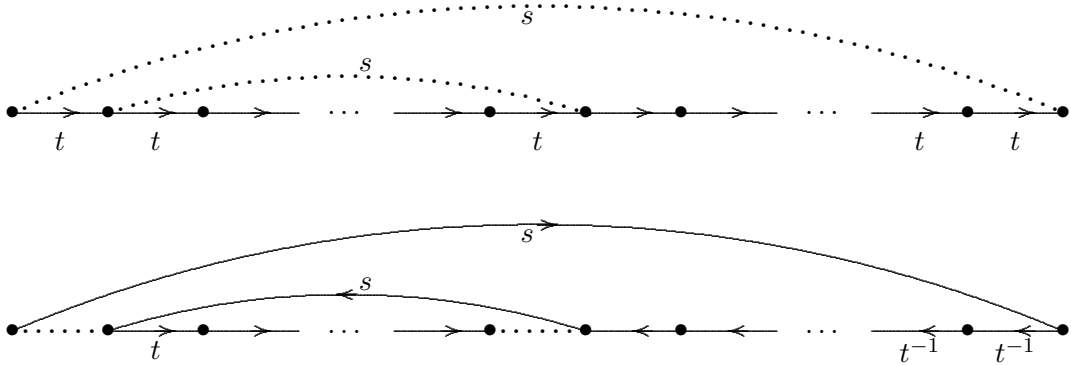

\begin{center}
\mbox
{
{%\small
\beginpicture
     \setcoordinatesystem units <0.5in,0.5in>
  
    \setsolid
    \plot -5 1 -2 1 /
    \plot -1 1 3 1 /
    \plot  4 1 6 1 /

    \arrow <6pt> [.2,.67] from -4.5 1 to -4.3 1     
    \arrow <6pt> [.2,.67] from -3.5 1 to -3.3 1     
    \arrow <6pt> [.2,.67] from -2.5 1 to -2.3 1     
    \arrow <6pt> [.2,.67] from -.5 1 to -.3 1     
    \arrow <6pt> [.2,.67] from .5 1 to .7 1     
    \arrow <6pt> [.2,.67] from 1.5 1 to 1.7 1     
    \arrow <6pt> [.2,.67] from 2.5 1 to 2.7 1     
    \arrow <6pt> [.2,.67] from 4.5 1 to 4.7 1     
    \arrow <6pt> [.2,.67] from 5.5 1 to 5.7 1       
    
    \put{$t$} at -4.5 .7
    \put{$t$} at -3.5 .7
    
%    \put{$t$} at -.5 .8
    \put{$t$} at .5 .7

    \put{$t$} at 4.5 .7
    \put{$t$} at 5.5 .7

    \put{$s$} at .39 2
    \put{$s$} at -1.3 1.5

    \put {$\bullet$} at -5 1
    \put {$\bullet$} at -4 1
    \put {$\bullet$} at -3 1
    
    \put {$\dots$} at -1.5 1
    
    \put {$\bullet$} at 0 1
    \put {$\bullet$} at 1 1
    \put {$\bullet$} at 2 1

    \put {$\dots$} at 3.5 1

    \put {$\bullet$} at 5 1
    \put {$\bullet$} at 6 1
      
     \setdots
     \setplotsymbol({\Large .})
     \circulararc -46 degrees from -5 1 center at .5 -12
     \setplotsymbol({\tiny -})
     \setsolid
     
     \setdots
     \setplotsymbol({\Large .})
     \circulararc -35 degrees from -4 1 center at -1.5 -7
     \setplotsymbol({\tiny -})
     \setsolid
       
     \endpicture
}% end small
}% end mbox

\vskip .4in
\mbox
{
{%\small
\beginpicture
     \setcoordinatesystem units <0.5in,0.5in>
  
    \setsolid
    \plot -4 1 -2 1 /
    \plot -1 1 0 1 /
    \plot  1 1 3 1 /
    \plot  4 1 6 1 /

%    \put{$t$} at -4.5 .8
    \put{$t$} at -3.5 .8
    
%    \put{$t$} at -.5 .8
%    \put{$t$} at .5 .8

    \put{$t^{-1}$} at 4.5 .8
    \put{$t^{-1}$} at 5.5 .8

    \put{$s$} at .39 2
    \put{$s$} at -1.3 1.5

%    \arrow <6pt> [.2,.67] from -4.5 1 to -4.3 1     
    \arrow <6pt> [.2,.67] from -3.5 1 to -3.3 1     
    \arrow <6pt> [.2,.67] from -2.5 1 to -2.3 1     
    \arrow <6pt> [.2,.67] from -.5 1 to -.3 1     
%    \arrow <6pt> [.2,.67] from .5 1 to .7 1     
    \arrow <6pt> [.2,.67] from 1.7 1 to 1.5 1     
    \arrow <6pt> [.2,.67] from 2.7 1 to 2.5 1     
    \arrow <6pt> [.2,.67] from 4.7 1 to 4.5 1     
    \arrow <6pt> [.2,.67] from 5.7 1 to 5.5 1     

    \arrow <6pt> [.2,.67] from .5 2.11 to .7 2.11    
    \arrow <6pt> [.2,.67] from -1.4 1.38 to -1.6 1.38

    \put {$\bullet$} at -5 1
    \put {$\bullet$} at -4 1
    \put {$\bullet$} at -3 1
    
    \put {$\dots$} at -1.5 1
    
    \put {$\bullet$} at 0 1
    \put {$\bullet$} at 1 1
    \put {$\bullet$} at 2 1

    \put {$\dots$} at 3.5 1

    \put {$\bullet$} at 5 1
    \put {$\bullet$} at 6 1

%     \setdots
%     \setplotsymbol({\Large .})
     \circulararc -46 degrees from -5 1 center at .5 -12
%     \setplotsymbol({\tiny -})
%     \setsolid
     
%     \setdots
%     \setplotsymbol({\Large .})
     \circulararc -35 degrees from -4 1 center at -1.5 -7
%     \setplotsymbol({\tiny -})
%     \setsolid
     
     \setdots
     \setplotsymbol({\Large .})

     \plot  -5 1 -4 1 /
     \plot  0 1 1 1 /
     
     \setplotsymbol({\tiny})
     \setsolid

     \endpicture
}% end small
}% end mbox
\caption {\small Altering the Hamiltonian path in Lemma \ref{le:G(de,e,2)}}
\label{fig:flipping}
\end{center}
\end{figure}

\begin{lemma}
\label{le:G(de,e,3)}
Let $G=G(de,e,3)$ with $d\geq 3, e\geq 2$, which has presentation
\begin{align*}
G=\langle t,s,r_1,r_2 \mid r_1^2=r_2^2=s^2=t^d&=1,\, tr_2=r_2t,\, tsr_1=sr_1t%=r_1ts
,\, sr_2s=r_2sr_2,\, r_1r_2r_1=r_2r_1r_2, \\
\underbrace{r_1tsr_1sr_1s\cdots}\limits_{e+1 \; \mathrm{factors}}&=\underbrace{tsr_1sr_1s\cdots}\limits_{e+1 \; \mathrm{factors}},\,\, r_2sr_1r_2sr_1=sr_1r_2sr_1r_2 \rangle.
\end{align*}
Then $\Gamma(G,\{t,s,r_1,r_2\})$ has a Hamiltonian cycle.
\end{lemma}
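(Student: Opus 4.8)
The plan is to obtain the Hamiltonian cycle by lifting one from the subgroup $H=\langle t,s,r_1\rangle$, using the machinery of Section~\ref{sec:lift} with $r_2$ playing the role of the order-two generator $r_n$. Since $t$, $s$, $r_1$ involve only the first two coordinates and permutations in $S_2\subset S_3$, the subgroup $H$ consists exactly of the $(a,b,0\mid\sigma)$ with $a+b\equiv 0\bmod e$ and $\sigma\in\{\sigma_0,\sigma_1\}$; hence $|H|=2d^2e$ and $H$ is the copy of $G(de,e,2)$ inside $G$, with the generators and presentation of Lemma~\ref{le:G(de,e,2)} (note $d\geq 3$ and $e\geq 2$, so that lemma applies). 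From the presentation of $G$ (or from the monomial matrices), $r_2$ has order $2$ and commutes with $t$ but satisfies braid, not commutation, relations with $s$ and with $r_1$. Thus in any cycle in $\Gamma(H,\{t,s,r_1\})$ a consecutive pair of edge labels is ``bad'' for $r_2$ precisely when both labels lie in $\{s,r_1\}$, and $\{t,s,r_1,r_2\}$ with $r_n=r_2$ is exactly the setting of Proposition~\ref{prop:lifting} and Corollary~\ref{cor:badness}.

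Let $C_H$ be the Hamiltonian cycle in $\Gamma(H,\{t,s,r_1\})$ furnished by Lemma~\ref{le:G(de,e,2)}. Apart from the two flips used in its construction, $C_H$ is an alternation of the length-$(d-1)$ runs of $t$'s and single occurrences of $s$ coming from the cycle $A$, together with connecting edges labeled $r_1$; since $d-1\geq 2$, every $s$ and every $r_1$ is flanked on both sides by $t$-edges, so no bad pair arises there. It remains to inspect the two successive flips with respect to $s$, which alter only the stretch of the path lying in one coset of $\langle s,t\rangle$: each flip deletes one edge and inserts a single new edge labeled $s$ whose neighbours in the resulting walk are the edge into the fold vertex and the inverse of the current last edge of the walk. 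A short check of the construction — the terminal edges of the (partially reversed) path are $t^{\pm1}$-edges, and the fold vertices sit inside $t$-runs or at block boundaries flanked by $t$-edges — shows that these new $s$-edges are again flanked by $t$-edges, so $C_H$ satisfies the hypothesis of Proposition~\ref{prop:lifting} and the lift is immediate. Should the bookkeeping instead leave a bounded number of bad pairs, one notes that the components of $\Gamma(G,\{t,s,r_1\})$ are the cosets of $H$ and that, from the monomial form of $r_2$, any two cosets joined by an edge labeled $r_2$ are in fact joined by many of them — a number that grows with $d$ and $e$ and hence exceeds $bad(C_H,r_2)$ — so Corollary~\ref{cor:badness} still delivers the Hamiltonian cycle.

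I expect the main obstacle to be precisely this flip bookkeeping: locating the fold vertices in the path of Lemma~\ref{le:G(de,e,2)} and reading off the exact local edge-label sequences produced by the two flips, so as to certify that $bad(C_H,r_2)$ is $0$ (or small), together with — in the fallback case — the count of $r_2$-edges running between adjacent cosets of $H$. A lesser point to nail down is that the isomorphism $H\cong G(de,e,2)$ carries $t$, $s$, $r_1$ to the identically named generators of Lemma~\ref{le:G(de,e,2)}, so that its Hamiltonian cycle can be quoted verbatim as $C_H$.
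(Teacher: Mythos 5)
Your proposal follows essentially the same route as the paper: take the Hamiltonian cycle of $H=\langle t,s,r_1\rangle\cong G(de,e,2)$ from Lemma~\ref{le:G(de,e,2)}, observe that bad pairs for $r_2$ are exactly consecutive labels from $\{s,r_1\}$ and can only arise from the two $s$-flips, and use the commutation of $t$ with $r_2$ to produce many $r_2$-edges between linked cosets so that Corollary~\ref{cor:badness} applies. The paper does not attempt your primary claim that $bad(C_H,r_2)=0$; it settles for the bound $bad(C_H,r_2)\leq 2$ and shows $c(i,j)\geq d\geq 3$ via the $t$-translates of any connecting $r_2$-edge, which is precisely your fallback argument.
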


\begin{proof}

Let $H = \langle t,s,r_1 \rangle$ and observe that $H$ is isomorphic to
$G(de,e,2)$. Set $S = \{t,s,r_1,r_2\}$. The components, $\{\Gamma_l\}$, of 
$\Gamma(G,S \setminus \{r_2\})$ are subgraphs on the various cosets of $H$ with $r_2$-labeled edges removed.
Since $r_2$ is of order two, we have a setting consistent with Corollary \ref {cor:badness}. 

For the cycle $C_H$ we take the Hamiltonian cycle constructed in Lemma \ref{le:G(de,e,2)}. 
Using the notation of Lemma \ref {le:G(de,e,2)}, $r_1$ is matched with $r$. 
Since $t$ and $r_2$ commute, the value of $bad(C_H,r_2)$ is found by counting the number of occurrences of consecutive $rs$ or $sr$ edge pairs. 
No such pairs appeared in the initial Hamiltonian path used in the construction of $C_H$.
At most two such pairs are introduced by the two $s$-flips and they must occur at the beginning and end of the altered path through the final coset (examine Figure~\ref{fig:flipping}). 
Consequently, $bad(C_H,r_2) \leq 2$. 

Suppose $\Gamma_i$ and $\Gamma_j$ are connected by an $r_2$ edge. Let $H_i = g_iH$ and $H_j = g_jH$ 
denote the left cosets that are the vertex sets of $\Gamma_i$ and $\Gamma_j$, respectively. Then
there are $h_i, h_j \in H$ with $g_ih_ir_2 = g_jh_j$. The generators $t$ and $r_2$ commute so that for $0 \leq k < d$ we have
\[ g_ih_it^kr_2 = g_ih_ir_2t^k = g_jh_jt^k\]
and each $(g_ih_it^k,g_jh_jt^k)$ is an edge linking $\Gamma_i$ and $\Gamma_j$. This shows that $c(i,j) \geq d$. Since $d \geq 3$ and $bad(C_H,r_2) \leq 2$,
Corollary \ref{cor:badness} applies and we conclude that $\Gamma(G,\{t,s,r_1,r_2\})$ has a Hamiltonian cycle.
\end{proof}

\begin{lemma}
\label{le:G(2e,e,3)}
Let $G=G(2e,e,3)$ with $e\geq 2$, which has presentation
\begin{align*}
G=\langle t, s, r_1, r_2 \mid r_1^2=r_2^2=s^2=t^2&=1,\, tr_2=r_2t,\, tsr_1=sr_1t%=r_1ts
,\, sr_2s=r_2sr_2,\, r_1r_2r_1=r_2r_1r_2, \\
\underbrace{r_1tsr_1sr_1s\cdots}\limits_{e+1 \; \mathrm{factors}}&=\underbrace{tsr_1sr_1s\cdots}\limits_{e+1 \; \mathrm{factors}},\,\, r_2sr_1r_2sr_1=sr_1r_2sr_1r_2 \rangle.
\end{align*}
Then $\Gamma(G,\{t,s,r_1,r_2\})$ has a Hamiltonian cycle.
\end{lemma}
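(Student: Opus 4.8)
The plan is to treat $G(2e,e,3)$ as the $d=2$ instance of $G(de,e,3)$ and to argue as in Lemma~\ref{le:G(de,e,3)}, but singling out a different order-two generator. In that proof one worked with $H=\langle t,s,r_1\rangle\cong G(de,e,2)$ and lifted along $r_2$, which succeeds only because the number of connecting edges, $c(i,j)\geq d$, beats $bad(C_H,r_2)\leq 2$; with $d=2$ that margin disappears. Instead I would use that, since $d=2$, the generator $t$ now has order two, and take $H=\langle s,r_1,r_2\rangle$, lifting along $t$. The first task is to identify $H$. Reading off the monomial matrices, $\langle s,r_1,r_2\rangle$ consists exactly of the elements $(a_1,a_2,a_3\mid\sigma)$ of $G(2e,e,3)$ with $a_1+a_2+a_3\equiv 0\bmod 2e$ (equivalently, whose nonzero entries have product $1$); this is precisely $G(2e,2e,3)=G(m,m,3)$ with $m=2e$ in its standard monomial representation, and $s,r_1,r_2$ are its standard generators. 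In particular $[G:H]=2$ and $\Gamma(H,\{s,r_1,r_2\})=\Gamma\big(G(2e,2e,3),\{s,r_1,r_2\}\big)$.

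For the cycle $C_H$ I would take the Hamiltonian cycle of Lemma~\ref{le:G(e,e,3)} applied with its parameter equal to $2e\geq 4$: in the notation there, $C_H=B^{2e}$ with $B=[A^{2e}\Sharp,s]$ and $A=[q,s,q,r,q,r]$, where now $q=r_2$ and $r=r_1$. The key structural feature is that the odd-position edge labels of $A$ are all $q=r_2$ while the even-position ones lie in $\{s,r_1\}$; truncating the last label of $A^{2e}$ and appending $s$ (again at an even position) preserves this alternation in $B$, and since $|B|=12e$ is even the alternation persists in $C_H=B^{2e}$. Hence every pair of consecutive edge labels in $C_H$ contains one equal to $r_2$. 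A short matrix computation (or the observation that $t$ is diagonal with its only nontrivial entry in coordinate $1$, while both $s$ and $r_1$ move coordinate $1$) shows that $t$ commutes with $r_2$ but with neither $s$ nor $r_1$. Therefore $bad(C_H,t)=0$.

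Since $bad(C_H,t)=0$, the hypothesis of Proposition~\ref{prop:lifting} is met verbatim for the order-two generator $t$ and the subgroup $H=\langle s,r_1,r_2\rangle$, so $\Gamma(G,\{t,s,r_1,r_2\})$ has a Hamiltonian cycle. Equivalently, $\Gamma(G,\{s,r_1,r_2\})$ has the two cosets of $H$ as its components, they are joined by $t$-labeled edges, and Corollary~\ref{cor:badness} applies because $bad(C_H,t)=0<c(1,2)$.

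The only genuine content lies in the first paragraph: correctly pinning down $\langle s,r_1,r_2\rangle$, verifying that it is an index-two copy of $G(2e,2e,3)$, and checking that $s,r_1,r_2$ are its standard generators so that Lemma~\ref{le:G(e,e,3)} applies; the rest is the short parity count for $bad(C_H,t)$ together with a citation. I do not anticipate a real obstacle. Conceptually, the moral is that the $G(e,e,3)$-type Hamiltonian cycles are ``$r_2$-rich'' and that when $d=2$ one should lift along $t$ (which commutes with $r_2$) rather than along $r_2$, reversing the roles played by $t$ and $r_2$ in the proof of Lemma~\ref{le:G(de,e,3)}.
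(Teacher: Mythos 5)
Your proof is correct, but it takes a genuinely different route from the paper. The paper keeps $H=\langle t,s,r_1\rangle\cong G(2e,e,2)$ and lifts along $r_2$; since the cycle of Lemma~\ref{le:G(2e,e,2)} has $bad(C_H,r_2)=8(e-1)$, Corollary~\ref{cor:badness} fails (the $d=2$ analogue of the $c(i,j)\geq d$ count in Lemma~\ref{le:G(de,e,3)} gives no margin), so the paper instead flips that cycle into a path $Q$, chains copies of $Q$ along the order-$4e$ element $tsr_1r_2$ into a large cycle spanning $4e$ of the $6e$ cosets, and absorbs the remaining $2e$ coset graphs via the commutative joining property --- a substantially longer argument requiring explicit coset bookkeeping. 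You instead exploit the fact that $d=2$ makes $t$ an involution, switch to the index-two subgroup $H=\langle s,r_1,r_2\rangle$, which is indeed exactly $G(2e,2e,3)$ with its standard generators (the matrices coincide, and the sum-of-exponents criterion pins down the subgroup), and observe that the cycle $B^{2e}$ of Lemma~\ref{le:G(e,e,3)} strictly alternates $r_2$ with elements of $\{s,r_1\}$ (including across the wrap-around, since $|B|=12e$ is even), so every consecutive pair of labels contains $r_2$, which commutes with $t$; hence $bad(C_H,t)=0$ and Proposition~\ref{prop:lifting} applies directly. This is a cleaner and shorter proof of the same lemma; what the paper's approach buys in exchange is a worked illustration of the chaining-plus-commutative-joining technique when no small-badness lift is available, but for this particular group your choice of subgroup makes that machinery unnecessary.
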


\begin{proof}
Let $S = \{t, s, r_1, r_2\}$ and $H = \langle S \setminus \{r_2\} \rangle$. Observe that each generator in $S$ is of order 2 and that 
$H$ is isomorphic to $G(2e,e,2)$. 
Let $\{\Gamma_l\}$ denote the components of $\Gamma(G,S \setminus \{r_2\})$. Each $\Gamma_ l $ has as vertex set some left coset of $H$ and
all are isomorphic to $\Gamma_H = \Gamma(H,S \setminus \{r_2\})$. 
Let $C_H$ denote the Hamiltonian cycle in $\Gamma_H$ described in Lemma \ref{le:G(2e,e,2)}. 
Since $bad(C_H,r_2)$ is quite large (its value is $8(e-1)$) the procedure used in the proof
of Lemma \ref{le:G(de,e,3)} will fail. Our approach here involves chaining together self-avoiding paths derived from $C_H$ into a large cycle $C$ which spans $4e$ of the 
$6e$ coset graphs. This large cycle can  
then be combined with spanning cycles in the remaining components using the commutative joining property to obtain the desired result.

The left cosets of $H$ are described as 
\begin{align*}
 H(k,1) &= \{(k,b,c\mid\sigma) \in G, \sigma \in \{\sigma_3,\sigma_5\} \},\\
 H(k,2) &= \{(a,k,c\mid\sigma) \in G, \sigma \in \{\sigma_2,\sigma_4\} \},\hbox{ and }\\
 H(k,3) &= \{(a,b,k\mid\sigma) \in G, \sigma \in \{\sigma_0,\sigma_1\} \},
\end{align*} 
where $0 \leq k < 2e$. For example, $H = H(0,3)$.

From Lemma \ref{le:G(2e,e,2)}, $C_H = [[r_1,s]^{e-1}],r_1,t]^4$. It is easy  to verify that $(r_1s)^{e-1}r_1t = ts$.  It follows that the 
path $Q$ obtained from $C_H \Sharp$ 
by flipping with respect to $s$ is given by
\[Q = [[r_1,s]^{e-1},r_1,t,s,[[r_1,s]^{e-1},r_1,t]^2,[r_1,s]^{e-1}].\]
If  $Q$ is applied starting at 1 the ending vertex is $tsr_1$. See Figure~\ref{fig:flip3}.

\begin{figure} [b!]
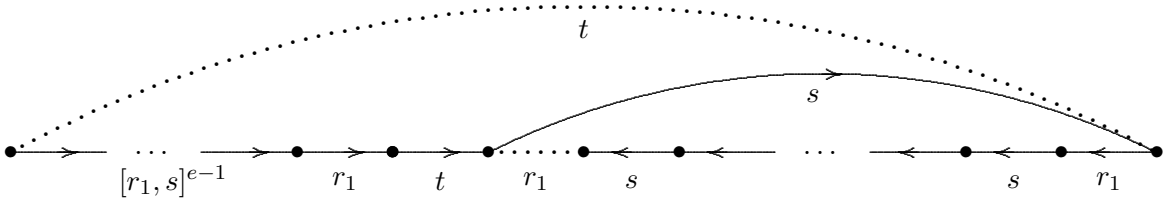

\begin{center}
\mbox
{
{%\small
\beginpicture
     \setcoordinatesystem units <0.5in,0.5in>
  
    \setsolid
    \plot -5 1 -4 1 /
    \plot -3 1 0 1 /

    \plot 1 1 3 1 /
    \plot  4 1 7 1 /

    \arrow <6pt> [.2,.67] from -4.5 1 to -4.3 1     
    \arrow <6pt> [.2,.67] from -2.5 1 to -2.3 1     
    \arrow <6pt> [.2,.67] from -1.5 1 to -1.3 1     
    \arrow <6pt> [.2,.67] from -.5 1 to -.3 1     
    \arrow <6pt> [.2,.67] from 1.5 1 to 1.3 1     
    \arrow <6pt> [.2,.67] from 2.5 1 to 2.3 1     
    \arrow <6pt> [.2,.67] from 4.5 1 to 4.3 1     
    \arrow <6pt> [.2,.67] from 5.5 1 to 5.3 1     
    \arrow <6pt> [.2,.67] from 6.5 1 to 6.3 1

    \arrow <6pt> [.2,.67] from 3.5 1.82 to 3.7 1.82     
    \put{$s$} at 3.4 1.6
    \put{$t$} at 1 2.3
        
    \put{$r_1$} at -1.5 .7
    \put{$t$} at -.5 .7
    \put{$r_1$} at .5 .7
    \put{$s$} at 1.5 .7
    
    \put{$r_1$} at 6.5 .7
    \put{$s$} at 5.5 .7
%    \put{$t$} at .5 .7
%    \put{$r$} at 1.5 .7

%    \put{$s$} at 4.5 .7
    
%    \put{$r$} at 5.5 .7

%    \put{$t$} at .39 2
    \put{$[r_1,s]^{e-1}$} at -3.3 .7

    \put {$\bullet$} at -5 1
    \put {$\bullet$} at -1 1
    \put {$\dots$} at -3.5 1
    \put {$\bullet$} at -2 1
    \put {$\bullet$} at -1 1
    
%    \put {$\dots$} at -1.5 1
    
    \put {$\bullet$} at -1 1
    \put {$\bullet$} at 0 1
    \put {$\bullet$} at 1 1
    \put {$\bullet$} at 2 1

    \put {$\dots$} at 3.5 1

    \put {$\bullet$} at 5 1
    \put {$\bullet$} at 6 1
    \put {$\bullet$} at 7 1
     
     \circulararc 53 degrees from 7 1 center at 3.5 -6
     
     \setdots
     \setplotsymbol({\Large .})
     \circulararc -57 degrees from -5 1 center at 1 -10

     \plot 0 1 1 1 /

     \setplotsymbol({\tiny -})
     \setsolid
     
  %   \setdots
  %   \setplotsymbol({\Large .})
  %   \circulararc 40 degrees from 7 1 center at 4.5 -6
  %   \setplotsymbol({\tiny -})
  %   \setsolid

     \endpicture
}% end small
}% end mbox

\caption {\small Adapting the Hamiltonian cycle from Lemma \ref{le:G(2e,e,2)}}
\label{fig:flip3}
\end{center}
\end{figure} 

%The path $[t,s,r_1,r_2]$ is self-avoiding. 
From $t s r_1 r_2=(e-1,1,0\mid\sigma_2)$ we see that
$$\begin{aligned}
(t s r_1 r_2)^{2i} &= (i(2e-2),i,i\mid\sigma_0) \in H(i,3) \text{ and}\\
(t s r_1 r_2)^{2i+1} &= (i(2e-2)+e-1,i+1,i \mid\sigma_2) \in H(i+1,2).   
  \end{aligned}
$$
It follows that 
$tsr_1r_2$ has order $4e$ and the cycle $[t,s,r_1,r_2]^{4e}$ visits each coset of the form $H(k,2)$ or $H(k,3)$.
If each factor of $[t, s, r_1]$ is replaced by $Q$ the result is a cycle $C$ that spans $4e$ of the $6e$ different coset graphs.
We use the fact that $t$ and $r_2$ commute and complete the argument by showing that any component $\Gamma_l$ disjoint from $C$ contains a spanning cycle $C_l$
such that $C$ and $C_l$ have the commutative joining property with respect to $r_2$. 

We first
establish that for any $k$ with $0 \leq k < 2e$ there is an edge $(v,vt)$ in $C$ with $vr_2 \in H(k,1)$.
%The cycle $C$ spans each of the coset graphs with vertex sets given by $H(k,2)$ or $H(k,3)$ and is disjoint from any $\Gamma_l$ with vertex set of the form
%$H(k,1)$. 
To accomplish this we observe that
$$(a,b,c\mid\sigma_1) r_2 = (a,b,c\mid \sigma_5) \hbox{ and } (a,b,c\mid \sigma_4) r_2 = (a,b,c\mid \sigma_3)$$ 
and so it suffices to show that for any $k$ with $0 \leq k < 2e$ there
is an edge $(v,v t)$ in $C$  with $v$ of the form $(k,b,c\mid\sigma_1)$ or $(k,b,c\mid\sigma_4)$.
%This is sufficient since vertex transitivity of the graphs $\Gamma_l$ allows 
% the alteration of the cycle $C_l$ so that when $v r_2$ is $\Gamma_l$ then $C_l$ contains the edge $(v r_2, v r_2 t)$.

The path $Q$ contains exactly three edges of the form $(v,vt)$ and the vertex $v$ is one of
\[
(r_1s)^{e-1}r_1,\quad (r_1s)^{e-1}r_1ts(r_1s)^{e-1}r_1 \qquad \text{ or }\quad (r_1s)^{e-1}r_1ts(r_1s)^{e-1}r_1t(r_1s)^{e-1}r_1.
\]
Only the first two of these vertices are adjacent via $r_2$ to a vertex in some $H(k,1)$. 
Using $(r_1s)^{e-1}r_1t = ts$ and the fact that all generators are of order two, the two useful vertices are $tst$ and $st$.
The complete collection of edges $(v,vt)$ in $C$ where $v r_2$ is not a vertex in $C$ corresponds to the set of vertices of one of the forms $(tsr_1r_2)^i tst$ or $(tsr_1r_2)^i st$ for $ 0  \leq i < 4e$. 
It is convenient to split into the cases where $i$ is odd and where $i$ is even. 
For $0 \leq j < 2e$, direct computations produce:
 \begin{align*}
      (tsr_1r_2)^{2j} tst &= (e-2j-1,e+j+1,j \mid \sigma_1),\\
      (tsr_1r_2)^{2j+1} tst &= (-2j-2,j+1,e+j+1 \mid \sigma_4),\\
      (tsr_1r_2)^{2j} st &= (-2j-1,e+j+1,j \mid \sigma_1), \hbox{ and } \\
      (tsr_1r_2)^{2j+1} st &= (e-2j-2,j+1,e+j+1 \mid \sigma_4).     
 \end{align*}

The set $\{-2j-2,-2j-1
\mid 0 \leq j < 2e\}$ will contain a complete set of residues modulo $2e$ and, consequently, for any $k$ there is an edge $(v,vt)$ in $C$ where $vr_2 \in H(k,1)$. 

We have shown that if $\Gamma_l$ is a coset graph disjoint from $C$ then there is an edge $(v,vt)$ in $C$ and $vr_2$ is a vertex in $\Gamma_l$. 
The vertex set of $\Gamma_l$ is a left coset of $H$ so that the cycle $(v r_2)C_H$ is a spanning cycle in $\Gamma_l$. 
Vertex transitivity can be used to alter this cycle to obtain a spanning cycle $C_l$ in $\Gamma_l$ containing the edge $(vr_2,vr_2t)$. 
It follows that $C$ and $C_l$ have the 
commutative joining property with respect to $r_2$ and Proposition \ref{prop:multi-join} applies to complete the proof. 
\end{proof}

We have thus established Hamiltonicity in each of the required base cases. The remaining graphs under consideration can be handled by induction to establish our main result.
\begin{theorem}
\label{th:cximprimHam}
If $G=G(de,e,n)$ is an irreducible imprimitive complex reflection group and $S$ is a standard generating set for $G$ (as given in Section \ref{sec:background}), then the (undirected right) Cayley graph $\Gamma(G,S)$ has a Hamiltonian cycle.
\end{theorem}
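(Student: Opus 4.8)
The plan is to proceed by induction on $n$, using the base-case lemmas already established (Lemmas~\ref{le:G(d,1,2)}, \ref{le:G(2e,e,2)}, \ref{le:G(e,e,3)}, \ref{le:G(de,e,2)}, \ref{le:G(de,e,3)}, \ref{le:G(2e,e,3)}) together with Proposition~\ref{prop:lifting}. First I would organize the three families of irreducible imprimitive groups according to their standard generating sets: $G(d,1,n)$ with $d\geq 2$ and generators $\{t,r_1,\dots,r_{n-1}\}$; $G(e,e,n)$ with $e\geq 2$ and generators $\{s,r_1,\dots,r_{n-1}\}$; and $G(de,e,n)$ with $d,e\geq 2$ and generators $\{s,t,r_1,\dots,r_{n-1}\}$. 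In each family I would take $r_n$ in Proposition~\ref{prop:lifting} to be the last simple transposition $r_{n-1}$, which has order two, and take $H=\langle S\setminus\{r_{n-1}\}\rangle$. As noted in the discussion opening Section~\ref{sec:cxgroupgraphs}, for $G(d,1,n)$ with $n\geq 3$, for $G(e,e,n)$ with $n\geq 4$, and for $G(de,e,n)$ with $n\geq 4$, the reflection $r_{n-1}$ commutes with every generator except $r_{n-2}$; moreover $H$ is then (isomorphic to) the corresponding group with $n$ replaced by $n-1$, so the induction is set up correctly.

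The key step is to verify that the inductively supplied Hamiltonian cycle $C_H$ in $\Gamma(H,S\setminus\{r_{n-1}\})$ satisfies the hypothesis of Proposition~\ref{prop:lifting}: whenever $(g,gs)$, $(gs,gst)$ are consecutive edges of $C_H$, at least one of $s,t$ commutes with $r_{n-1}$. Since the only generator of $H$ that fails to commute with $r_{n-1}$ is $r_{n-2}$, this amounts to the assertion that $C_H$ never uses the label $r_{n-2}$ twice in a row, i.e., $bad(C_H,r_{n-1})=0$ in the language of Definition~\ref{def:badness}. So I would strengthen the inductive hypothesis to assert not merely that $\Gamma(H,S\setminus\{r_{n-1}\})$ has a Hamiltonian cycle but that it has one with no two consecutive $r_{n-2}$-labels. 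To propagate this, I would appeal to the second bullet of the inductive hypothesis inside the proof of Proposition~\ref{prop:lifting}: the Hamiltonian cycle it produces in $\Gamma(G,S)$ has the property that any two consecutive labels neither of which is $r_{n-1}$ also occur consecutively in $C_H$; hence if $C_H$ has no consecutive $r_{n-1}$-pair... wait, the relevant statement is that the cycle we build in $\Gamma(G,S)$ avoids consecutive $r_{n-1}$-labels provided $C_H$ avoided consecutive $r_{n-2}$-labels — I would check directly from the construction that the joining operations introduce no new consecutive $r_{n-1}$-pair and preserve the no-consecutive-$r_{n-1}$ property, using that $r_{n-1}$ is an order-two generator and that in the next round of induction the role of $r_{n-2}$ is played by what was $r_{n-1}$. (Concretely: one checks that the newly relevant "forbidden" generator for the next lift is $r_{n-1}$, and the output cycle of the current lift uses $r_{n-1}$ only at the joining edges, which by construction are isolated occurrences.)

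With this strengthened statement, the induction closes: each of the six base-case lemmas provides a Hamiltonian cycle, and one inspects those explicit cycles (or, in the three non-explicit cases, the constructions) to confirm the required no-consecutive-forbidden-label condition holds at the base level — for instance, in Lemma~\ref{le:G(d,1,2)} the cycle $B^{2d}=[[t]^{d-1},r]^{2d}$ plainly has no consecutive $r$'s when $d\geq 2$, and similarly one reads off the condition from the cycles in Lemmas~\ref{le:G(2e,e,2)} and~\ref{le:G(e,e,3)} and from the analysis in Lemmas~\ref{le:G(de,e,2)}, \ref{le:G(de,e,3)}, \ref{le:G(2e,e,3)}, where the badness counts are tracked explicitly. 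Then Proposition~\ref{prop:lifting} lifts $G(de,e,n-1)$-cycles to $G(de,e,n)$-cycles for all $n$ above the base size, yielding a Hamiltonian cycle in $\Gamma(G(de,e,n),S)$ for every irreducible imprimitive $G(de,e,n)$.

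\textbf{Main obstacle.} I expect the delicate point to be the bookkeeping on consecutive edge labels: making sure the inductive hypothesis is formulated strongly enough that the "bad pair" condition needed for the next application of Proposition~\ref{prop:lifting} is genuinely preserved by the cycle-joining operations, and verifying that each base-case cycle (especially the flipped paths of Lemmas~\ref{le:G(de,e,2)} and~\ref{le:G(2e,e,3)}, where consecutive $r$/$s$ pairs are deliberately introduced) still meets the condition with respect to the \emph{next} generator $r_{n-1}$ rather than with respect to the generators internal to the base case. Everything else — the commutativity facts about $r_{n-1}$, the coset structure, the identification of $H$ with the smaller group — is routine given the setup in Section~\ref{sec:background} and the machinery of Section~\ref{sec:lift}.
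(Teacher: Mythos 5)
Your overall strategy coincides with the paper's: induction on $n$, the six lemmas as base cases for $n\leq 3$, and Proposition~\ref{prop:lifting} for the inductive step with $r_{n-1}$ playing the role of the order-two generator and $H\cong G(de,e,n-1)$. However, the issue you single out as the ``main obstacle'' --- strengthening the inductive hypothesis to produce a Hamiltonian cycle with no two consecutive $r_{n-2}$-labels, propagating that property through the joining operations, and re-inspecting every base-case cycle for it --- is not an obstacle at all, and leaving it as an unverified program is the gap in your write-up. The hypothesis of Proposition~\ref{prop:lifting} fails for a pair of consecutive edges $(g,gs)$, $(gs,gst)$ only when \emph{both} $s$ and $t$ fail to commute with $r_{n-1}$; since $r_{n-2}$ is the \emph{unique} generator of $H$ with that failure, this forces $s=t=r_{n-2}$. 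But $r_{n-2}$ has order two, so then $gst=g\,r_{n-2}^2=g$ and the walk returns immediately to $g$, which is impossible in a self-avoiding cycle on a group with more than two elements. Hence \emph{every} Hamiltonian cycle in $\Gamma(H,S\setminus\{r_{n-1}\})$ automatically satisfies $bad(C_H,r_{n-1})=0$, the unstrengthened inductive hypothesis ``$\Gamma(H,S\setminus\{r_{n-1}\})$ is Hamiltonian'' suffices, and no bookkeeping about which consecutive label pairs survive the lifts or appear in the flipped base-case paths is needed. (That bookkeeping genuinely matters only in Lemmas~\ref{le:G(de,e,3)} and~\ref{le:G(2e,e,3)}, where $r_2$ fails to commute with \emph{two} generators --- which is exactly why $n=3$ must be handled as a base case rather than by the clean inductive step.) With this observation inserted, your argument closes and is the paper's proof.
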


\begin{proof}
Since $G=G(de,e,n)$ is an irreducible imprimitive complex reflection group, then $de\geq 2$, $n\geq 1$, and $(de,e,n)\neq (2,2,2)$.
When $n=1$ the group is cyclic so $\Gamma(G,S)$ is circulant and has a Hamiltonian cycle.
Lemmas~\ref{le:G(d,1,2)}-\ref{le:G(2e,e,3)} prove the statement for $n=2$ and $n=3$. 
Proceed by induction on $n$.
The generating set for $G = G(de,e,n)$ being considered can be written as $S \cup \{r_n\}$ where the subgroup $H$ of $G$ generated by $S$ is isomorphic to $G(de,e,n-1)$, $r_n$ commutes with all but one of the elements of $S$ and that element is of order two. 
By induction, there is a Hamiltonian cycle $C_H$ in 
$\Gamma(H, S)$. 
This cycle can contain no consecutive edge labels both of which fail to commute with $r_n$, i.e., $bad(C_H,r_n) = 0$.  
Thus by Proposition~\ref{prop:lifting}, $\Gamma_G$ also contains a Hamiltonian cycle.
\end{proof}

\section*{Acknowledgements}
We thank Dave Morris for helpful conversations.

\section*{References}

\bibliographystyle{alpha}
\bibliography{HamCycleDMEdVersion}

\end{document}